\documentclass{bmcart}


\usepackage{graphicx}
\usepackage{epsfig, graphicx}
\usepackage{subfigure}
\usepackage{latexsym,amsfonts,amsbsy,amssymb,amsmath,amsthm}
\usepackage{color}
\usepackage{longtable}
\usepackage{enumerate}
\usepackage{algorithm2e}
\usepackage{amsthm}
\newtheorem{thm}{Theorem}[section]
\newtheorem{lem}{Lemma}[section]
\newtheorem{cor}{Corollary}[section]
\newtheorem{remark}{Remark}[section]

\newcommand{\set}[1]{\left\{#1\right\}}
\renewcommand{\mid}{\mathrel{}\middle|\mathrel{}}
\newcommand{\norm}[1]{\left\lVert#1\right\rVert} 
\newcommand{\abs}[1]{\left\lvert#1\right\rvert}
\newcommand{\FM}{{\scriptsize F\!M}}

\usepackage{url}

\usepackage{lipsum}
\usepackage{amsfonts}
\usepackage{graphicx}
\usepackage{epstopdf}


\startlocaldefs
\endlocaldefs

\begin{document}

\begin{frontmatter}

\begin{fmbox}
\dochead{Research}


\title{Error constant estimation under the maximum norm for linear Lagrange interpolation}


\author[
      addressref={aff1},
      email={shirleymaegalindo@cmu.edu.ph}   
]{\inits{SM}\fnm{Shirley Mae} \snm{Galindo}}
\author[
   addressref={aff3},
   email={k-ike@m.sc.niigata-u.ac.jp}
]{\inits{K}\fnm{Koichiro} \snm{Ike}}
\author[
   addressref={aff3},
   corref={aff3},
   email={xfliu@math.sc.niigata-u.ac.jp}
]{\inits{X}\fnm{Xuefeng} \snm{Liu}}

\address[id=aff1]{
  \orgname{Graduate School of Science and Technology, Niigata University}, 
  \city{Niigata},                              
  \cny{Japan}                                    
}
\address[id=aff3]{
  \orgname{Faculty of Science, Niigata University}, 
  \city{Niigata},                              
  \cny{Japan}                                    
}

\begin{artnotes}
\end{artnotes}

\end{fmbox}



\begin{abstractbox}

\begin{abstract} 

For the Lagrange interpolation over a triangular domain, we propose an efficient algorithm  to rigorously evaluate the interpolation error constant under the maximum norm by using the finite element method (FEM). In solving the optimization problem corresponding to the interpolation error constant, the maximum norm in the constraint condition is the most difficult part to process. To handle this difficulty, a novel method is proposed by combining the orthogonality of the interpolation associated to the Fujino--Morley FEM space and the convex-hull property of the Bernstein representation of functions in the FEM space. Numerical results for the lower and upper bounds of the interpolation error constant for triangles of various types are presented to verify the efficiency of the proposed method.


\end{abstract}


\begin{keyword}
\kwd{Lagrange interpolation}
\kwd{finite element method}
\kwd{Fujino--Morley interpolation}
\kwd{Bernstein polynomial}
\end{keyword}


\end{abstractbox}
%

\end{frontmatter}

\section{Introduction}\label{header-n457}

In this paper, we consider the error estimation for the linear Lagrange interpolation over triangle elements and provide explicit values for the error constant in the error estimation under the $L^\infty$-norm.

Before the detailed discussion of our results, let us introduce  the existing literature on the Lagrange interpolation function in a general scope.

\begin{itemize}
    \item {\bf (1D case)} Given $1$-dimensional interval $I=(0,1)$,
    since $H^1(I)\subset C(\overline{I})$,  we can define the Lagrange interpolation $\Pi^L u$ such that $\Pi^L u$ is a linear function satisfying
$(u - \Pi^L u)(0) =(u - \Pi^L u)(1) =0$.
Then, the following results are well known as optimal estimates if $u$ is regular enough in the sense that the right-hand sides of the inequalities are well defined:
\begin{eqnarray*}
&\norm{u-\Pi^L u}_{0,I} \le \frac{1}{\pi^2} \abs{u}_{2,I},\quad \abs{u-\Pi^L u}_{1,I} \le \frac{1}{\pi} \abs{u}_{2,I},& \\
&\norm{u-\Pi^L u}_{\infty,I} \le \frac{1}{8} \norm{u^{(2)}}_{\infty,I},&
\end{eqnarray*}

where $u^{(2)}$ denotes the second derivative of $u$, $\norm{\,\cdot\,}_{0,I}$ and $\norm{\,\cdot\,}_{\infty,I}$ denote the $L^2$- and $L^\infty$-norms, respectively, and $\abs{\,\cdot\,}_{1,I}$ and $\abs{\,\cdot\,}_{2,I}$ denote the $H^1$- and $H^2$-seminorms, respectively. The estimation presented above are optimal in the sense that there exist functions for which the equalities hold.
\begin{itemize}
\item Let $u(x) := \sin (\pi x)$ on the interval $(0,1)$. Then, $\Pi^L u(x) = 0$. In this case,
$$
\norm{u - \Pi^L u}_{0,I}  = \frac{1}{\pi^2}\abs{u}_{2,I}
,~~
\abs{u - \Pi^L u}_{1,I} = \frac{1}{\pi}\abs{u}_{2,I}
~.
$$

\item Let $u(x) := x^2$ on the interval $(0,1)$. Then, $\Pi^L u (x) = x$. In this case,
$$
\norm{u-\Pi^Lu}_{\infty,I} = \frac{1}{8} \norm{u^{(2)}}_{\infty,I}.
$$
\end{itemize}
\item {\bf (2D case)} Over a triangle $K$ with vertices $p_i$ ($i=1,2,3$), the Lagrange interpolation function $\Pi^L u$ is the linear function  such that (see Figure \ref{fig:interpolation})
$$
(u-\Pi^L u)(p_i)=0,\quad \forall i=1,2,3.
$$ 

\begin{figure}[htp]
\begin{center}
	\includegraphics[scale=1]{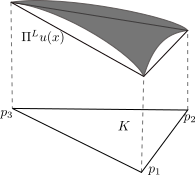}
\end{center}

\caption{\label{fig:interpolation} A linear Lagrange interpolation function $\Pi^L u$ defined on  a triangle $K$} 
\end{figure}

In case of $L^2$-norm and $H^1$-seminorm error estimation of $\Pi^L$,  one need to estimate the interpolation error constants appearing in the following inequalities:
$$
\norm{u-\Pi^L u}_{0,K} \le C_0(K) \abs{u}_{2,K},\quad
\abs{u-\Pi^L u}_{1,K} \le C_1(K) \abs{u}_{2,K}.
$$
Let $h$ be the medium edge length of $K$, $\theta$ the maximum angle, and $\alpha h$ ($0<\alpha\le 1$) the smallest edge length. Kikuchi and Liu \cite{kikuchi2007estimation, Liu-2010} obtained a bound of $C_0$ and $C_1$ as follows:
\begin{equation*}
C_0(K) \le \frac{h}{\pi}\sqrt{1+ \abs{\cos \theta}}, ~
C_1(K) \le 0.493h \frac{1+\alpha^2+\sqrt{1+2\alpha^2\cos2\theta + \alpha^4}}{\sqrt{2\left(1+\alpha^2-\sqrt{1+2\alpha^2\cos2\theta+\alpha^4}\right)}}.
\end{equation*}
Also, Kobayashi \cite{Kobayashi-2011} showed that for a triangle $K$ with edge lengths $A,B,C$ and area $S$, the following holds:
$$
\abs{u - \Pi^L u}_{1,K} \le C_1(K) \abs{u}_{2,K}, \quad \forall u \in H^2(K),
$$
where the constant $C_1(K)$ is defined by
$$
C_1(K) := \sqrt{\frac{A^2B^2C^2}{16S^2} - \frac{A^2 + B^2 + C^2}{30}-\frac{S^2}{5}\left( \frac{1}{A^2} + \frac{1}{B^2} + \frac{1}{C^2}\right)}.
$$

The optimal estimation of constants $C_0(K)$ and $C_1(K)$ for a concrete $K$ can be obtained by solving corresponding eigenvalue problems with rigorous lower eigenvalue bounds; see results of \cite{Liu-2018,Liao-2019}.

For $L^\infty$-norm error estimation under the $L^\infty$-norm of objective function, Waldron \cite{Waldron-1998} provides the following sharp inequality:
$$
\norm{u - \Pi^L u}_{\infty,K} \le \frac{1}{2}(R^2 - d^2) \norm{u^{(2)}}_{\infty,K},
$$

where $R$ is the radius of the circumscribed circle of $K$, $d$ is the distance of the center $c$ of the circumscribed circle from $K$, and $\norm{u^{(2)}}_{\infty,K}$ is defined by
$$
\norm{u^{(2)}}_{\infty,K} 
:= \sup_{x \in K}  \sup_{\substack{u,v \in \mathbb{R}^2\\ \norm{u}=\norm{v}=1}} \abs{D_{u} D_{v} u(x)} =
 \sup_{x \in K}  \sup_{\substack{\xi \in \mathbb{R}^2 \\ \norm{\xi}=1}} \abs{D^2_{\xi}u(x)}. 
$$
In particular, if $c \in K$,
$$
\norm{u - \Pi^L u}_{\infty, K} \le \frac{1}{2}R^2 \norm{u^{(2)}}_{\infty,K}.
$$
A detailed discussion on the $L^\infty$-norm of interpolation error for a quadratic polynomial $f$ is considered by D'Azevedo and Simpson \cite{D'Azevedo-1989}. In \cite{Shewchuk-2002}, Shewchuk gives a survey of the interpolation error estimation with $L^\infty$-norm for both $f-\Pi^L f$ and $\nabla(f-\Pi^L f)$, along with the discussion on the relation between the interpolation error and the finite element approximation error functions. Also, the discussion on the affection of the aspect ratio of triangle element to  the interpolation error can be found in Cao \cite{Cao-2005}. 

\end{itemize}

\medskip

In this research, we consider the $L^\infty$-norm estimation for the Lagrange interpolation over triangle element $K$ by using the $H^2$-seminorm of the objective function, that is, 
\begin{equation}
\label{eq:objective-problem}    
\norm{u-\Pi^{L} u}_{\infty,K} \le C^{L}(K) \abs{u}_{2,K},\quad \forall u \in H^2(K).
\end{equation}
Here $C^L(K)$ is the interpolation error constant to be evaluated explicitly. For example, for a unit isosceles right triangle element, we have an estimation of the optimal constant  $C^L(K)$ as 
$$
0.40432 \le  C^L(K) \le 0.41595 ~.
$$
Estimation of $C^L(K)$ for triangle of general shapes is provided in Theorem \ref{th-raw}, while sharp bounds for concrete triangles are discussed in \S \ref{sec3}.  
Such kind of estimation is helpful to provide explicit maximum norm error estimation for the FEM solution to boundary value problems by further applying the point-wise error estimation (see e.g.,  \cite{Fujita-1955}), which will be considered in our succeeding work; see also classical qualitative error analysis under the maximum norm in Section 19-22 of \cite{Ciarlet-1991};

\medskip

The contribution of our paper is summarized as follows.
\begin{itemize}
    \item [(1)] For triangle element $K$ of general shapes, a formula to give an upper bound of $C^L(K)$ is obtained by theoretical analysis. The bound is raw but works well for triangle element of arbitrary shapes. Particularly, our analysis tells that the value of $C^L(K)$ can be very large and tend to $\infty$ if the triangle element tends to degenerate to a 1D segment; see detail in \S2.2.
    \item [(2)]For specific triangle element $K$, the optimal estimation of $C^L(K)$ is obtained by solving the corresponding optimization problem over $H^2(K)$ under the constraint condition involving $L^\infty$-norm. The processing of the constraint condition with $L^\infty$-norm is not an easy work. We develop a novel algorithm to provide efficient and sharp estimation for the solution of the optimization problem. With a light computation, one can obtain the estimation of $C^L(K)$ with relative error less than 1\%. 
\end{itemize}

\medskip

The rest of our paper is structured as follows. At the end of this section, we introduce the preliminary concepts and notations to be used throughout the paper. In \S2, the estimation of the upper bound for $C^L(K)$ is considered using theoretical approach. The raw upper bound of the interpolation error constant is calculated for a right isosceles triangle. Also, we investigate the asymptotic behavior of the constant as the triangle tends to degenerate. In \S3, using finite element method (FEM), an algorithm for the optimal estimation of the constant is proposed. Lower bounds for the constant are calculated to confirm the efficiency of the proposed algorithm. The numerical results are summarized and the conclusion is presented in \S4.

\paragraph{Notation}Let us introduce the notation for the function spaces used in this paper. In most cases, the domain $\Omega$ of functions is selected as a triangle element $K$. The standard notation is used for Sobolev function spaces $W^{k,p}(\Omega)$. The associated norms and seminorms are denoted by $\norm{\,\cdot\,}_{k,p,\Omega}$ and $\abs{\,\cdot\,}_{k,p,\Omega}$, respectively (see, e.g., Chapter 1 of \cite{Brenner+Scott2008} and Chapter 1 of \cite{Ciarlet2002}). Particularly, for special $k$ and $p$, we use abbreviated notations as $H^k(\Omega)=W^{k,2}(\Omega)$, $\abs{\,\cdot\,}_{k,\Omega} = \abs{\,\cdot\,}_{k,2,\Omega}$, and $L^p(\Omega)=W^{0,p}(\Omega)$. The set of polynomials over $K$ of up to degree $k$ is denoted by $P_k(K)$. The second order derivative is given by $D^2 u := ( u_{xx}, u_{xy}, u_{yx}, u_{yy})$ for $u \in H^2(K)$.

Given a triangle $K$, denote each vertex by $p_i$ ($i=1,2,3)$ and the largest edge length by $h_K$; see Figure \ref{fig:tri}. We follow the notation introduced by Liu and Kikuchi \cite{Liu-2010} to configure a general triangle with geometric parameters. Let $h, \alpha$ and $\theta$ be positive constants such that
$$
h>0, \quad 0 < \alpha \le 1, \quad \left(\frac{\pi}{3} \le \right) \cos^{-1}\left(\frac{\alpha}{2}\right) \le \theta < \pi.
$$
Define a triangle $K_{\alpha,\theta,h}$ with three vertices $p_1(0,0)$, $p_2(h,0)$ and $p_3(\alpha h \cos \theta, \alpha h \sin \theta)$. Note that $h \le h_K$. In case of $h=1$, the notation $K_{\alpha,\theta,1}$ is abbreviated as $K_{\alpha,\theta}$.

\begin{figure}[htp]
\begin{center}
	\includegraphics[scale=0.35]{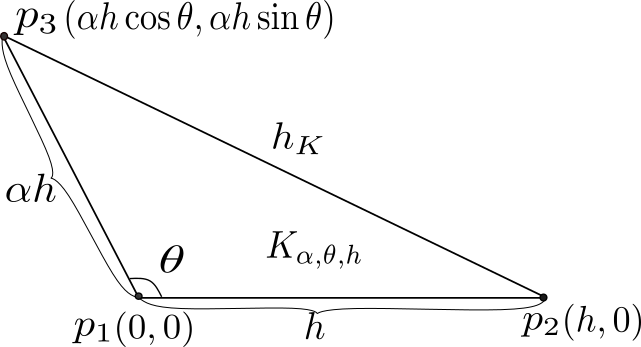}
\end{center}
\caption{\label{fig:tri} Configuration of triangle $K_{\alpha,\theta,h}$}
\end{figure}

With the above configuration of the triangle $K_{\alpha,\theta,h}$, the optimal constant $C^L(K)$ in \eqref{eq:objective-problem} can be defined as follows:
\begin{equation}
    \label{def:constant-CL-def}
C^L(\alpha,\theta,h) := \sup_{u\in H^2(K_{\alpha,\theta,h})} \frac{\abs{u}_{2,K_{\alpha,\theta,h}}}{ \norm{u-\Pi^L u}_{\infty,K_{\alpha,\theta,h}} }~.
\end{equation}
By scaling of the triangle element, it is easy to confirm that $C^L(\alpha,\theta,h)=h C^L(\alpha,\theta,1)$.

In the rest of the paper, we show how to obtain explicit bounds for the error constant $C^L(\alpha,\theta,h)$. 


\section{Raw upper bound of the constant}\label{sec2}
In this section, a raw upper bound of the constant is obtained through theoretical analysis. Such a bound applies to triangles of arbitrary shapes. 

First, let us quote a lemma about the trace theorem, which gives estimation for the integral over edge of a triangle element. For reader's convenience, we show the proof in a concise way; refer to e.g. \cite{vejchodsky2014robust, Carstensen-2014,Liu-2019} for more detailed discussion.

\begin{figure}[htp]
\begin{center}
	\includegraphics[scale=0.3]{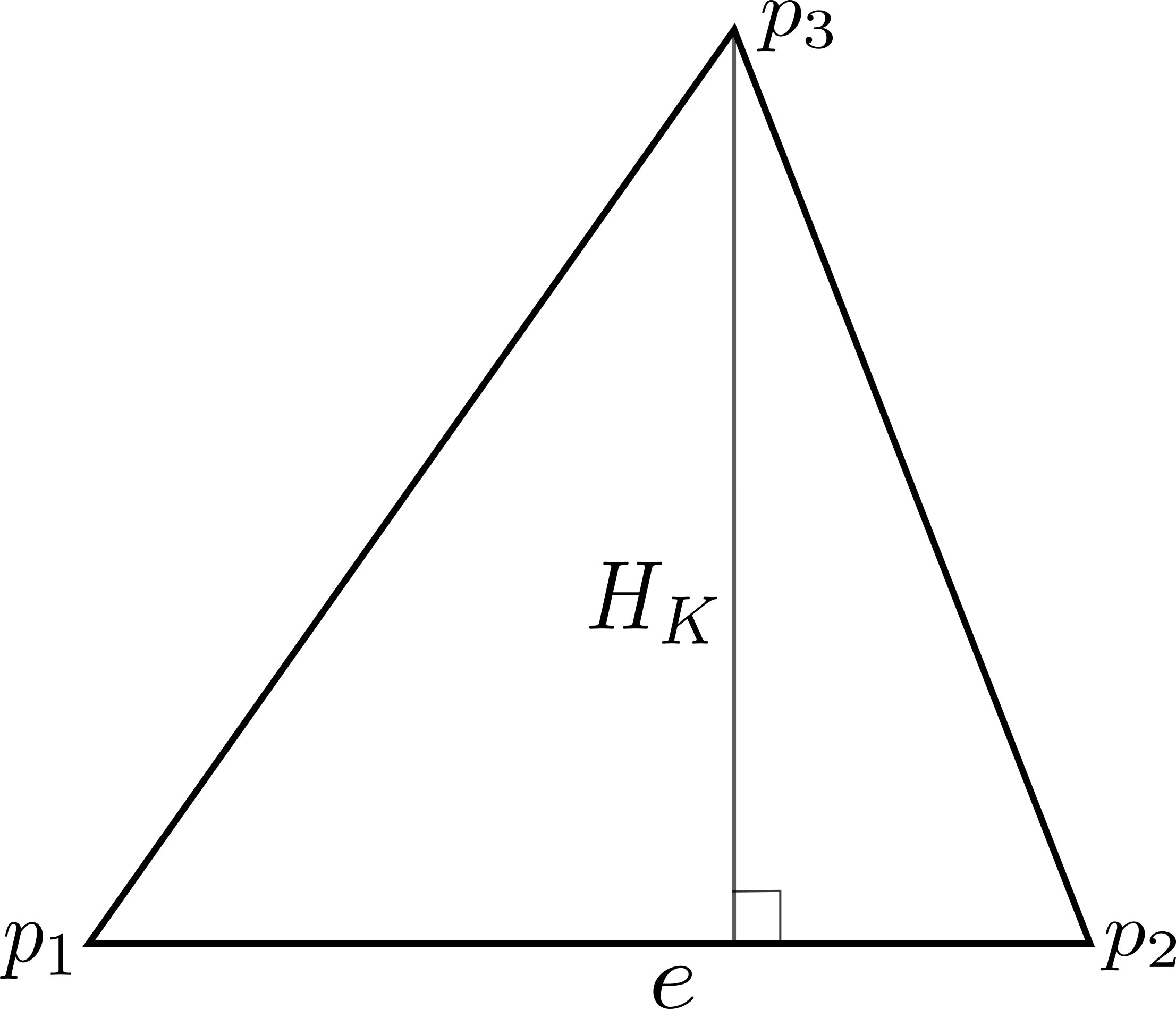}
\end{center}
\caption{\label{fig:trace} A triangle $K$ with base $e$ and height $H_K$}
\end{figure}

\begin{lem}[Trace theorem]\label{lem:trace_theorem}
Let $e$ be one of the edges of triangle $K$; see Figure \ref{fig:trace}. Given $w\in H^1({K})$, we have the following estimation:
$$
\norm{w}^2_{0,e} \le \frac{\abs{e}}{\abs{K}} \left\{\norm{w}_{0,K}^2 + h_K \norm{w}_{0,K} \abs{w}_{1,K} \right\}.
$$
\end{lem}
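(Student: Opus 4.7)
The plan is to prove the trace inequality by a divergence-theorem argument with a cleverly chosen vector field that isolates the edge $e$ from the other two edges. Let $p$ denote the vertex of $K$ opposite to $e$, and let $H_K$ be the height of $K$ from $p$, so that $|K|=\tfrac12|e|H_K$. I introduce the vector field $\Phi(x):=x-p$ on $K$. It satisfies $\nabla\cdot\Phi=2$, and on the two edges emanating from $p$ the vector $\Phi$ is tangential, hence $\Phi\cdot\vec{n}=0$ there; on the remaining edge $e$, a direct geometric computation gives $\Phi\cdot\vec{n}=H_K$. This selection is the crucial design choice.

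Next, assuming first that $w\in C^\infty(\overline{K})$ (and extending afterward by density in $H^1(K)$), I apply the divergence theorem to $w^2\Phi$:
$$
\int_K \nabla\cdot(w^2\Phi)\md x \;=\; \int_{\partial K} w^2\,\Phi\cdot\vec{n}\md s \;=\; H_K\,\|w\|_{0,e}^2.
$$
Expanding the left-hand side using $\nabla\cdot(w^2\Phi)=2w\,\nabla w\cdot\Phi+w^2\nabla\cdot\Phi=2w\,\nabla w\cdot\Phi+2w^2$ yields the identity
$$
H_K\,\|w\|_{0,e}^2 \;=\; 2\|w\|_{0,K}^2 \;+\; 2\int_K w\,\nabla w\cdot\Phi\md x.
$$

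The estimate of the remaining cross term is routine: since every point of $K$ lies within distance at most $h_K$ from the vertex $p$ (because $h_K$ is the largest edge length and hence bounds the diameter), $\|\Phi\|_{\infty,K}\le h_K$. Cauchy--Schwarz then gives
$$
\left|\int_K w\,\nabla w\cdot\Phi\md x\right|
\;\le\; h_K\,\|w\|_{0,K}\,|w|_{1,K}.
$$
Substituting back, dividing by $H_K$, and converting $2/H_K=|e|/|K|$ via the area identity delivers the claimed bound. The main (but minor) obstacle is recognizing that $\Phi(x)=x-p$ is the right vector field to make the boundary integral concentrate on $e$ with the clean constant $H_K$; the rest is bookkeeping plus a standard density argument to pass from smooth $w$ to $w\in H^1(K)$.
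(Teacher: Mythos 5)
Your proposal is correct and follows essentially the same route as the paper: both apply the divergence/Green theorem to $w^2(x-p)$ with $p$ the vertex opposite $e$, use that this field is tangential on the other two edges and has normal component $H_K$ on $e$, bound the cross term by $h_K\norm{w}_{0,K}\abs{w}_{1,K}$ via Cauchy--Schwarz, and convert $2/H_K$ to $\abs{e}/\abs{K}$. No substantive differences.
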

\begin{proof}
For any $w \in H^{1}(K)$, the Green theorem leads to
$$
\int_K ((x,y)- p_3 ) \cdot \nabla (w^2) \mbox{d}K = \int_{\partial K} ((x,y)- p_3 )\cdot {\overrightarrow{n}} w^2 \mbox{d}s - \int_K 2 w^2 \mbox{d} K.
$$
Here, $\overrightarrow{n}$ is the unit outer normal direction on the boundary of $K$. For the term $((x,y)- p_3 )\cdot {\overrightarrow{n}}$, we have
\begin{equation*}
((x,y)- p_3 )\cdot {\overrightarrow{n}} =
\left\{
\begin{array}{ll}
0   & \mbox{on }  p_1p_3, ~ p_2p_3\:, \\
H_K & \mbox{on } e\:.
\end{array}
\right.
\end{equation*}
Here, $H_K$ is the height of the triangle with base as $e$. Thus,

\begin{align*}
H_K \int_e w^2 \mbox{d} s & = \int_K 2 w^2 \mbox{d} K + \int_K ((x,y)- p_3 ) \cdot \nabla (w^2) \mbox{d} K\\
& \le \int_K 2 w^2 \mbox{d} K + 2h_K \int_K w \abs{\nabla w} \mbox{d} K \\
& \le 2\norm{w}_{0,K}^2 + 2h_K \norm{w}_{0,K} \norm{\nabla w}_{0,K}.
\end{align*}
We can now draw the conclusion by sorting the above inequality. 
\end{proof}

Using the trace theorem, the following result provides a pointwise estimation of the interpolation error.

\begin{lem} \label{lem-ptws}
Given $u\in H^2(K)$, for any point $\mathbf{x}_0 \in K$, we have
$$
\abs{(u-\Pi^L u)(\mathbf{x}_0)} \le \dfrac{\sqrt{2\abs{p_1\mathbf{x}_0}}}{\sqrt{H_{\widetilde{K}}}} \left(h_K \abs{u-\Pi^L u}_{1,K} \abs{u}_{2,K} + \abs{u-\Pi^L u}_{1,K}^2 \right)^{\frac{1}{2}},
$$
where $h_K$ is the longest edge length of $K$, and $H_{\widetilde{K}}$ is the height of the subtriangle $\widetilde{K} = p_1\mathbf{x}_0 p_3$ with respect to the base $\tilde{e} = p_1\mathbf{x}_0$ (see Figure \ref{fig:subtriKtilde}).
\end{lem}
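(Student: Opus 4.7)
The plan is to reduce the two-dimensional pointwise estimate to a one-dimensional Cauchy--Schwarz bound along the segment $\tilde{e} = p_1\mathbf{x}_0$ (exploiting that $u-\Pi^L u$ vanishes at $p_1$), and then convert the resulting $L^2(\tilde{e})$ integral of the gradient into volume-type quantities on the subtriangle $\widetilde{K} = p_1\mathbf{x}_0 p_3$ via the trace theorem (Lemma \ref{lem:trace_theorem}). Writing $w := u - \Pi^L u \in H^2(K) \subset C(\overline{K})$, the hypothesis $w(p_1)=0$ lets us open up $w(\mathbf{x}_0)$ as a one-dimensional integral along $\tilde{e}$, while the geometry of $\widetilde{K}$ (having $\tilde{e}$ as one of its edges with associated height $H_{\widetilde{K}}$) is exactly what makes the trace theorem produce the factor $|\tilde{e}|/|\widetilde{K}| = 2/H_{\widetilde{K}}$ in the final bound.

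\textbf{Step 1 (1D reduction).} Parameterize $\tilde{e}$ by arc length $s \in [0, |p_1\mathbf{x}_0|]$. Since $w(p_1)=0$, the fundamental theorem of calculus combined with Cauchy--Schwarz yields
$$
|w(\mathbf{x}_0)|^2 \;=\; \left|\int_0^{|p_1\mathbf{x}_0|} \partial_s w \,\mbox{d}s\right|^2 \;\le\; |p_1\mathbf{x}_0| \int_{\tilde{e}} |\partial_s w|^2 \,\mbox{d}s \;\le\; |p_1\mathbf{x}_0|\int_{\tilde{e}} |\nabla w|^2 \,\mbox{d}s,
$$
using $|\partial_s w| \le |\nabla w|$ pointwise. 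The restriction of $w$ to $\tilde{e}$ is well defined because $w \in H^2(K)$.

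\textbf{Step 2 (trace on subtriangle) and conclusion.} Apply Lemma \ref{lem:trace_theorem} separately to the components $\partial_x w, \partial_y w \in H^1(\widetilde{K})$ on the edge $\tilde{e}$ of $\widetilde{K}$, sum the two inequalities, and collapse the cross terms by Cauchy--Schwarz in $\mathbb{R}^2$ to obtain
$$
\int_{\tilde{e}} |\nabla w|^2 \,\mbox{d}s \;\le\; \frac{|\tilde{e}|}{|\widetilde{K}|}\Bigl\{ |w|_{1,\widetilde{K}}^2 + h_{\widetilde{K}}\,|w|_{1,\widetilde{K}}\,|w|_{2,\widetilde{K}} \Bigr\}.
$$
Combining with Step 1, substituting $|\tilde{e}|/|\widetilde{K}| = 2/H_{\widetilde{K}}$, using monotonicity of seminorms $|w|_{k,\widetilde{K}} \le |w|_{k,K}$, the bound $h_{\widetilde{K}} \le h_K$, and the identity $|w|_{2,K} = |u|_{2,K}$ (since $\Pi^L u$ is linear), then taking a square root, produces exactly the claimed inequality with $|\tilde{e}| = |p_1\mathbf{x}_0|$.

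\textbf{Main obstacle.} The only subtle bookkeeping is combining the two component-wise trace inequalities into the clean form involving $|w|_{1,\widetilde{K}}\,|w|_{2,\widetilde{K}}$: a direct term-by-term sum leaves $\norm{\partial_x w}_{0,\widetilde{K}}|\partial_x w|_{1,\widetilde{K}} + \norm{\partial_y w}_{0,\widetilde{K}}|\partial_y w|_{1,\widetilde{K}}$, which must be recognized as an inner product on $\mathbb{R}^2$ and tightened by Cauchy--Schwarz; otherwise the seminorm structure will not match that of the target estimate.
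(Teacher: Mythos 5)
Your proof is correct and follows essentially the same route as the paper: the fundamental theorem of calculus along $\tilde e$ using $(u-\Pi^L u)(p_1)=0$, Cauchy--Schwarz, the trace theorem (Lemma \ref{lem:trace_theorem}) applied on the subtriangle $\widetilde K$, and then monotonicity of the seminorms together with $h_{\widetilde K}\le h_K$. The only (harmless) difference is that the paper applies the trace lemma once to the scalar directional derivative $\partial (u-\Pi^L u)/\partial t$ and bounds its norms by $\abs{u-\Pi^L u}_{1,\widetilde K}$ and $\abs{u}_{2,\widetilde K}$, which sidesteps the component-wise recombination you flag as the main obstacle while yielding the identical final constant.
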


\begin{figure}[htp]
\begin{center}
	\includegraphics[scale=0.3]{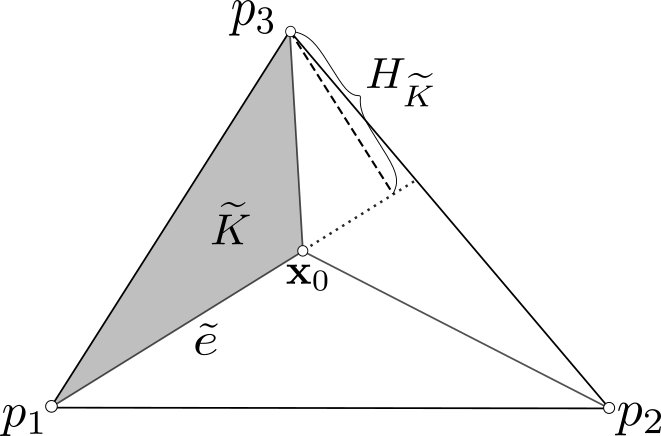}
\end{center}
\caption{\label{fig:subtriKtilde} A subtriangle $\widetilde{K}$ in a triangle $K$ }
\end{figure}

\begin{proof}
Let $g = u - \Pi^L u$ and $t$ be the direction along edge $p_1 \mathbf{x}_0$. In Lemma \ref{lem:trace_theorem}, by taking $\displaystyle w:=\frac{\partial g}{\partial t}$, we have 
$$
\norm{\frac{\partial g}{\partial t}}^2_{0,\tilde{e}} \le \frac{\abs{\tilde{e}}}{\lvert\widetilde{K}\rvert} \left(\norm{w}^2_{0,\widetilde{K}} + h_{\widetilde{K}} \norm{w}_{0,\widetilde{K}} \abs{w}_{1,\widetilde{K}} \right) \le \frac{\abs{\tilde{e}}}{\lvert\widetilde{K}\rvert} \left(\abs{g}^2_{1,\widetilde{K}} + h_{\widetilde{K}} \abs{g}_{1,\widetilde{K}} \abs{g}_{2,\widetilde{K}} \right).
$$
Taking the Taylor expansion of $g$ on the segment $\tilde{e}$ and noting that $g(p_1) = 0$, 
\begin{eqnarray*}
\abs{g(\mathbf{x}_0)} & = & \int_{p_1\mathbf{x}_0} \dfrac{\partial g}{\partial t}\mbox{d}t + g(p_1) 
\le \sqrt{\abs{p_1\mathbf{x}_0}} \cdot \norm{\frac{\partial g}{\partial t}}_{0,\tilde{e}} \notag\\
& \leq & \dfrac{\abs{p_1\mathbf{x}_0}}{\sqrt{|\widetilde{K}|}} \left(h_K \abs{g}_{1,\widetilde{K}} \abs{g}_{2,\widetilde{K}} + \abs{g}_{1,\widetilde{K}}^2 \right)^{\frac{1}{2}} \label{eq:in_proof_g_x0_est}
\\
& \le & \frac{\sqrt{2\abs{p_1\mathbf{x}_0}}}{\sqrt{H_{\widetilde{K}}}} \left(h_K \abs{g}_{1,K} \abs{g}_{2,K} + \abs{g}_{1,K}^2 \right)^{\frac{1}{2}}.
\end{eqnarray*}
The conclusion follows.
\end{proof}

Liu and Kikuchi \cite{Liu-2010} considered the estimation of the constant $C_1(\alpha,\theta)$ for different types of triangles $K = K_{\alpha,\theta}$ such that
$$
\abs{u - \Pi^L u}_{1,K}\le C_1(\alpha,\theta) h \abs{u}_{2,K}, \quad \forall u \in H^2(K),
$$
where $h$ is the medium length of $K$. The constant $C_1(\alpha,\theta)$ is used to give a bound for $C^L(K)$ as shown in the lemma below.

\begin{lem}\label{th-raw}
Given $u \in H^2(K)$, for any point $\mathbf{x}_0 \in K$, we have
\begin{equation}
\label{eq:est_l_infty}
\abs{(u -\Pi^L u)(\mathbf{x}_0)} \le \frac{\sqrt{2 \abs{p_1 \mathbf{x}_0}}}{\sqrt{H_{\widetilde{K}}}} \left(C_1(\alpha,\theta)hh_K + C^2_1(\alpha,\theta)h^2\right)^{\frac{1}{2}} \abs{u}_{2,K}.
\end{equation}
\end{lem}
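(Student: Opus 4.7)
The plan is to combine the pointwise estimate of Lemma~\ref{lem-ptws} with the $H^1$-seminorm interpolation error bound of Liu and Kikuchi quoted just above the statement. Both ingredients are already in place, so the argument reduces to a substitution plus a simple algebraic rearrangement under the square root.

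First, I would apply Lemma~\ref{lem-ptws} directly to $u$, which yields
$$
\abs{(u-\Pi^L u)(\mathbf{x}_0)} \le \dfrac{\sqrt{2\abs{p_1\mathbf{x}_0}}}{\sqrt{H_{\widetilde{K}}}} \left(h_K \abs{u-\Pi^L u}_{1,K} \abs{u}_{2,K} + \abs{u-\Pi^L u}_{1,K}^2 \right)^{1/2}.
$$
Both terms inside the parentheses involve the $H^1$-seminorm of the interpolation error, and this is exactly the quantity controlled by $C_1(\alpha,\theta)$ through the inequality $\abs{u-\Pi^L u}_{1,K} \le C_1(\alpha,\theta)\,h\,\abs{u}_{2,K}$.

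Next, I would substitute this bound into the two occurrences of $\abs{u-\Pi^L u}_{1,K}$ on the right-hand side: the first term becomes $C_1(\alpha,\theta)\,h\,h_K\,\abs{u}_{2,K}^2$ and the second becomes $C_1^2(\alpha,\theta)\,h^2\,\abs{u}_{2,K}^2$. Factoring the common $\abs{u}_{2,K}^2$ out of the parentheses and pulling $\abs{u}_{2,K}$ outside the square root produces precisely inequality \eqref{eq:est_l_infty}.

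The only mildly delicate point is the book-keeping of geometric constants: one must be consistent about $h$ denoting the edge length $\abs{p_1p_2}$ used in the configuration of $K_{\alpha,\theta,h}$ and $h_K$ denoting the longest edge of $K$, noting that $h \le h_K$ in general. Apart from that, there is no real analytic obstacle — the substantive work (the trace inequality, the Taylor-expansion step along $p_1\mathbf{x}_0$, and the explicit construction of $C_1(\alpha,\theta)$) has already been carried out in Lemmas~\ref{lem:trace_theorem} and \ref{lem-ptws} and in \cite{Liu-2010}, so this lemma is essentially their corollary.
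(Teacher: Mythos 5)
Your proposal is correct and matches the paper's intent exactly: the paper gives no explicit proof for this lemma, treating it as an immediate consequence of substituting the Liu--Kikuchi bound $\abs{u-\Pi^L u}_{1,K} \le C_1(\alpha,\theta)\,h\,\abs{u}_{2,K}$ into the two occurrences of $\abs{u-\Pi^L u}_{1,K}$ in Lemma~\ref{lem-ptws} and factoring out $\abs{u}_{2,K}$. Your bookkeeping remark about $h$ versus $h_K$ is also consistent with the paper's conventions.
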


\subsection{The case for a right isosceles triangle}
Using Lemma \ref{th-raw}, we obtain the upper bound of the constant for right isosceles triangles: For the right isosceles triangle $K=K_{1,\frac{\pi}{2},h}$,
\begin{equation}\label{eq:bound-rit}
\norm{u - \Pi^L u}_{\infty,K} \le 1.3712h \abs{u}_{2,K}.
\end{equation}

\begin{figure}[htp]
\begin{center}
	\includegraphics[scale=0.5]{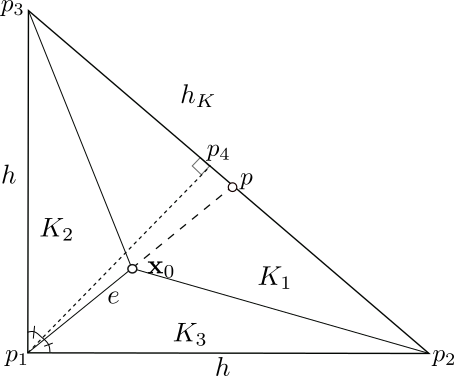}
\end{center}
\caption{\label{fig:righttri}A right isosceles triangle $K_{1,\frac{\pi}{2},h}$}
\end{figure}

Suppose a point $\mathbf{x}_0$ subdivides $K$ into $K_1, K_2, K_3$; see Figure \ref{fig:righttri}. 
Let us consider the estimation of the term $\abs{p_1\mathbf{x}_0}/H_{K_2}$, which is required in Lemma 
\ref{th-raw}. Let $p_1p_4$ be the height of $K$ with base as $p_2 p_3$. Due to the symmetry of $K$, it is enough to only consider the case that $\mathbf{x}_0 \in K$ is below the line $p_1p_4$.
Let $p$ be the intersection of the extended line of $p_1\mathbf{x}_0$ and edge $p_2p_3$. Note that $\abs{p_1\mathbf{x}_0} \le \abs{p_1p}$. 
For $p:=(\overline{x},\overline{y})$ on $p_2p_4$, $\abs{p_1p} = \sqrt{\overline{x}^2+\overline{y}^2}$. The height of $K_2$ with base $p_1\mathbf{x}_0$ is given by
$$
H_{K_2} = \frac{h\overline{x}}{\sqrt{\overline{x}^2+\overline{y}^2}}.
$$

Then, since $\overline{y} = h-\overline{x}$,
$$
\frac{\abs{p_1p}}{H_{K_2}} =  \frac{2\overline{x}^2-2h\overline{x}+h^2}{h\overline{x}}.
$$

The above quantity takes its maximum value at $p = \left(\frac{h}{2},\frac{h}{2}\right)$ and $p = (h,0)$, and its maximum value is $1$. 
Thus, for any $p$ on $p_2p_4$, $\abs{p_1p}/H_{K_2} \le 1$. From \cite{Liu-2010}, $C_1\left(1,\frac{\pi}{2}\right) \le 0.49293$. Since $h_K = \sqrt{2}h$, by inequality (\ref{eq:est_l_infty}), 
$$
\abs{(u -\Pi^L u)(\mathbf{x}_0)} \le \sqrt{2} \left[(0.49293)\sqrt{2} + 0.49293^2\right]^{\frac{1}{2}} h \abs{u}_{2,K} \le 1.3712h \abs{u}_{2,K}.
$$
Hence, we obtain the error estimate for a right isosceles triangle as in \eqref{eq:bound-rit}.

\subsection{Dependence of the constant on the shape of $K$}

In this subsection, we consider the variation of the interpolation constant when a reference triangle, i.e., the right isosceles triangle, is transformed to a general triangle.
\begin{thm}
\label{thm:formula-of-general-bound}
For a general element $K_{\alpha,\theta}$, the following estimation for constant $C^L(\alpha, \theta)$ holds:
$$
C^L(\alpha, \theta) \le \frac{v_+(\alpha, \theta)}{2\sqrt{\alpha \sin \theta}} C^L\left(1,\frac{\pi}{2}\right)~,
$$ 
where $v_+(\alpha, \theta) = 1 + \alpha^2 + \sqrt{1+2\alpha^2\cos 2\theta + \alpha^4}$.
\end{thm}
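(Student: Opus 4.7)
The plan is to reduce the general triangle $K_{\alpha,\theta}$ to the reference right isosceles triangle $\hat K := K_{1,\pi/2}$ via an affine change of variables, and to track how each norm in the definition of $C^L$ transforms. Let $F:\hat K \to K_{\alpha,\theta}$ be the affine map $F(\hat x)=A\hat x$ that sends the three vertices of $\hat K$ to those of $K_{\alpha,\theta}$; from the prescribed vertex coordinates one reads off
\[
A = \begin{pmatrix} 1 & \alpha\cos\theta \\ 0 & \alpha\sin\theta \end{pmatrix}, \qquad \det A = \alpha\sin\theta.
\]
For $u\in H^2(K_{\alpha,\theta})$, set $\hat u := u\circ F$.

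The first observation is that the linear Lagrange interpolation commutes with $F$: since $F$ preserves $P_1$ and carries vertices to vertices, $\Pi^L \hat u = (\Pi^L u)\circ F$. As $F$ is a bijection onto $K_{\alpha,\theta}$, the maximum-norm interpolation error is invariant,
\[
\norm{u - \Pi^L u}_{\infty, K_{\alpha,\theta}} = \norm{\hat u - \Pi^L \hat u}_{\infty, \hat K}.
\]
Next, by the chain rule, $D^2 \hat u(\hat x) = A^{\top}(D^2 u)(A\hat x)A$. Since the Frobenius norm is submultiplicative against the spectral norm $\norm{A}$, one has $\abs{A^{\top} M A}_F \le \norm{A}^2 \abs{M}_F$ for every matrix $M$. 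Combining this with the change of variables $x = A\hat x$ gives
\[
\abs{\hat u}_{2,\hat K}^2 \le \norm{A}^4 \int_{\hat K} \abs{(D^2 u)(A\hat x)}_F^2 \md \hat x = \norm{A}^4 \abs{\det A}^{-1} \abs{u}_{2,K_{\alpha,\theta}}^2.
\]

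Plugging these two relations into the definition of $C^L(1,\pi/2)$ applied on $\hat K$ yields
\[
\norm{u-\Pi^L u}_{\infty, K_{\alpha,\theta}} \le C^L\!\left(1,\tfrac{\pi}{2}\right) \norm{A}^2 \abs{\det A}^{-1/2} \abs{u}_{2, K_{\alpha,\theta}}.
\]
To finish, I would identify $\norm{A}^2$ with the larger eigenvalue of $A^{\top}A$, whose characteristic polynomial is $\lambda^2 - (1+\alpha^2)\lambda + \alpha^2\sin^2\theta = 0$; using $1 - 2\sin^2\theta = \cos 2\theta$ this root is exactly $v_+(\alpha,\theta)/2$. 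Substituting and dividing by $\sqrt{\alpha\sin\theta}$ gives the stated inequality. The only non-routine point is the matrix inequality $\abs{A^{\top} M A}_F \le \norm{A}^2 \abs{M}_F$ combined with the correct pullback of the $H^2$-seminorm through the Jacobian; once those are in hand, the rest is direct computation.
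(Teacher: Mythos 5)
Your proposal is correct and follows essentially the same route as the paper: pull back to the reference triangle $K_{1,\pi/2}$ by the affine map, use the invariance of the maximum norm of the interpolation error under that map, and bound the change of the $H^2$-seminorm by the factor $v_+(\alpha,\theta)/(2\sqrt{\alpha\sin\theta})$. The only difference is that the paper cites Theorem~1 of Liu--Kikuchi for the seminorm transformation estimate, whereas you re-derive it directly from $\norm{A}^2\abs{\det A}^{-1/2}$ with $\norm{A}^2=\lambda_{\max}(A^{\top}A)=v_+(\alpha,\theta)/2$, which checks out.
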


\begin{proof}
Let us consider the affine transformation between $x = (x_1,x_2) \in K_{\alpha,\theta}$ and $\xi = (\xi_1,\xi_2) \in K_{1,\frac{\pi}{2}}$:
$$
\xi_1 = x_1 - \frac{x_2}{\tan \theta}, \quad \xi_2 = \frac{x_2}{\alpha \sin \theta}, \quad\mbox{or}\quad x_1 = \xi_1 + \alpha \xi_2 \cos \theta, \quad x_2 = \alpha \xi_2 \sin \theta.
$$

Given $\tilde{v}(\xi)$ over $K_{1,\frac{\pi}{2}}$, define $v(x)$ over $K_{\alpha,\theta}$ by $v(x_1,x_2) = \tilde{v}(\xi_1, \xi_2)$. Thus,
$$
\norm{v}_{\infty, K_{\alpha,\theta}} =
\norm{\tilde{v}}_{\infty, K_{1,\frac{\pi}{2}}}~. 
$$
The estimation for the variation of $H^2$-seminorm in Theorem 1 of \cite{Liu-2010} tells that
$$
\abs{v}_{2, K_{\alpha,\theta}} \ge
\frac{2 \sqrt{\alpha \sin \theta}}{v_+(\alpha,\theta)} 
\abs{\tilde{v}}_{2,K_{1,\frac{\pi}{2}}} ~.
$$
Thus, we draw the conclusion from the definition of constant $C^L(\alpha,\theta)$ in (\ref{def:constant-CL-def}).
\end{proof}
\begin{remark}
By using the raw bound of $C^L\left(1,\frac{\pi}{2}\right) \le 1.3712 h $ in (\ref{eq:bound-rit}), an explicit but raw bound of $C^L(\alpha,\theta)$ is available.
Later, with a sharp and rigorous estimation of $C^L\left(1,\frac{\pi}{2}\right)$ based on numerical approach, the bound can be improved as 
\begin{equation}
\label{eq:estimation-of-CL-general-shape} 
C^L(\alpha, \theta,h) 
\le 0.41595 h \frac{v_+(\alpha, \theta)}{2\sqrt{\alpha \sin \theta}}~.
\end{equation}
\end{remark}

\begin{remark}
Here are two remarks on the asymptotic behavior of the constant when the triangle tends to degenerate.
\begin{enumerate}
\item Suppose the maximum inner angle $\theta$ of $K_{\alpha,\theta}$ is close to $\pi$;
see Figure \ref{fig:obttri}. 
Let $u(x,y) := x^2 + y^2$. Then, $\Pi^L u (x,y) = x + ((\alpha - \cos \theta)/\sin \theta) y$ and
$$
\norm{u - \Pi^L u}_{\infty,K_{\alpha,\theta}} = (2\alpha \cos \theta - \alpha^2 - 1)/4, \quad 
\abs{u}_{2,K_{\alpha,\theta}} = 2 \sqrt{\alpha \sin \theta}.
$$
Thus, we have a lower bound of $C^L(\alpha,\theta)$ as follows, 
$$
C^L(\alpha,\theta) \ge \frac{2\alpha \cos \theta - \alpha^2 -1}{8\sqrt{\alpha \sin \theta}}~.
$$
In this case, $C^L(\alpha,\theta)$ diverges to $\infty$ as $\theta$ tends to $\pi$.
\begin{figure}[htp]
\begin{center}
	\includegraphics[scale=0.3]{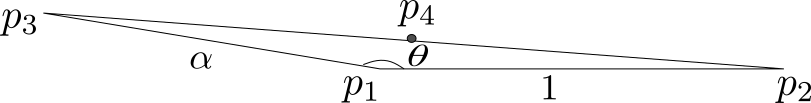}
\end{center}
\caption{\label{fig:obttri}A triangle $K_{\alpha,\theta}$ with angle $\theta$ close to $\pi$}
\end{figure}

\item For triangle $K_{\alpha, \frac{\pi}{2}}$ shown in Figure \ref{fig:smrighttri}, let $u(x,y) := \abs{(x,y)- p_4}^2$, where $p_4$ is the midpoint of the edge $p_2p_3$. Then, $\Pi^L u = (\alpha^2 + 1)/4$ and 
$$
\norm{u - \Pi^L u}_{\infty,K_{\alpha, \frac{\pi}{2}}} = (\alpha^2 +1)/4, \quad 
\abs{u}_{2,K_{\alpha, \frac{\pi}{2}}} = 2\sqrt{\alpha}~.
$$
Thus,
$$
\frac{\norm{u - \Pi^L u}_{\infty,K_{\alpha, \frac{\pi}{2}}}}{\abs{u}_{2,K_{\alpha, \frac{\pi}{2}}}} = \frac{\alpha^2+1}{8\sqrt{\alpha}} ~
\left(\le C^L\left(\alpha,\frac{\pi}{2}\right) \right).
$$
In  case that $\alpha \to 0$, although the maximum inner angle is invariant, 
the interpolation error constant $C^L\left(\alpha,\frac{\pi}{2}\right)$ tends to $\infty$ .

\begin{figure}[htp]
\begin{center}
	\includegraphics[scale=0.35]{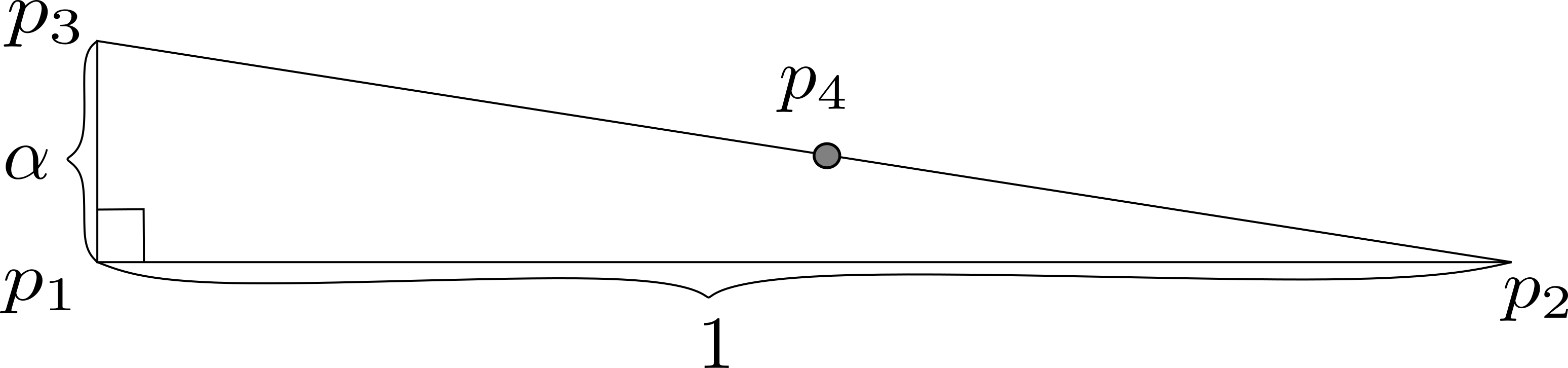}
\end{center}
\caption{\label{fig:smrighttri}A right triangle $K_{\alpha, \frac{\pi}{2}}$ with one leg length close to $0$. }
\end{figure}
\end{enumerate}
\end{remark}

\section{Optimal estimation of the constant}\label{sec3}
In the previous section, we obtain explicit bounds for the interpolation constant for triangles of general shape. Basically, such bounds from theoretical analysis only provide raw bound for the objective constant. In this section, we propose a numerical algorithm to obtain the optimal estimation of the constant $C^L(K)$ for specific triangles.

Let us define the space $V^L(K) := \left\{u \in H^2(K) \mid u(p_i) = 0 \ (i = 1,2,3) \right\}$. 
Let $\mathcal{T}^h$ be a triangulation of the domain $K$ and define the space
\begin{multline*}
V^{\FM}_h(K) := \biggl\{ u_h \biggm| u_h|_{T} \in P_2(T), \ \forall T \in \mathcal{T}^h;\  u_h(p_i) = 0\ (i = 1,2,3); \ u_h \mbox{ is continu-}\biggr.\\
\biggl.\mbox{ous on each vertex of } \mathcal{T}^h; \ \int_{e}\frac{\partial u_h}{\partial n} \mbox{d}s \mbox{ is continuous on each interior edge } e \mbox{ of } \mathcal{T}^h \biggr\}~.
\end{multline*}
For $u_h, v_h \in V^{\FM}_h(K)$, define the discretized $H^2$-inner product and seminorm by 
$$
\langle u_h, v_h \rangle_h := \sum_{T \in \mathcal{T}^h} \int_T D^2u_h \cdot D^2 v_h ~\text{d}T,\quad \abs{u_h}_{2,K} := \sqrt{\langle u_h, u_h \rangle_h}~.
$$
Let us define the two quantities over the triangle $K$:
\begin{equation}
    \label{eq:def-lambda-lambda_h}
\lambda(K) := \inf_{u \in V^{L}(K)} \frac{\abs{u}_{2,K}^2}{\norm{u}_{\infty,K}^2}, \qquad \lambda_{h}(K) := \min_{u_h \in V_h^{\FM}(K)} \frac{\abs{u_h}_{2,K}^2}{\norm{u_h}_{\infty,K}^2}.
\end{equation}
Note that $C^L(K) = \sqrt{\lambda(K)}^{-1}$ holds. In Theorem \ref{th-main}, we describe the algorithm to bound $\lambda$ by using $\lambda_h$.

\vspace{0.2in}

Given $u \in H^2(K)$, the Fujino--Morley interpolation $\Pi^{\FM}_h u$ is a function satisfying 
$$
\Pi^{\FM}_h \in V^{\FM}_h(K); \quad \Pi^{\FM}_h u |_T \in P_2(T), \quad \forall T \in \mathcal{T}^h,
$$
and at the vertices $p_i$ and edges $e_i$ of $K$, 
$$
(u - \Pi^{\FM}_h u)(p_i) = 0, \quad \int_{e_i} \frac{\partial}{\partial n}(u - \Pi^{\FM}_h u) \mbox{d}s = 0 \quad (i = 1,2,3)~.
$$
The Fujino--Morley interpolation has the property that (see, e.g., \cite{Liu-2018,Liao-2019}) 
\begin{equation}\label{eq:FM-property}
\langle u - \Pi^{\FM}_h u, v_h \rangle_h = 0, \quad \forall v_h \in V^{\FM}_h(K).
\end{equation}
Let $V(h) := \left\{u + u_h \mid u \in V^{L}(K), \ u_h \in V^{\FM}_h(K)\right\}$. Thus, it is easy to see that the Fujino--Morley interpolation is just the projection $P_h: V(h)\to V^{\FM}_h(K)$ with respect to the inner product $\langle \cdot, \cdot \rangle_h$. 

Below, let us introduce the theorem that provides an explicit lower bound of $\lambda$. 
Such a result is inspired by idea of \cite{LIU2015341} for the lower bounds of eigenvalue problems. 
\begin{thm}\label{th-main}
Suppose there exists a quantity $C_h^{\FM}$ such that, 
\begin{equation}
\label{eq:CFM_prop}
\norm{u - \Pi^{\FM}_h u}_{\infty,K} \le C^{\FM}_h \abs{u - \Pi^{\FM}_h u}_{2,K},\quad 
\forall u \in V^L(K)
\:.
\end{equation}
Then, the following lower found of $\lambda(K)$ is available.
\begin{equation}
\label{eq:lambda_CFM}
\lambda(K) \ge \frac{\lambda_{h}}{1+(C_h^{\FM})^2\lambda_{h}}~. 
\end{equation}
\end{thm}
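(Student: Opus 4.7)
The plan is to prove this lower bound by decomposing an arbitrary $u\in V^L(K)$ into its Fujino--Morley interpolant plus its interpolation error, exploiting the orthogonality \eqref{eq:FM-property} to split $|u|_{2,K}^2$, and then controlling $\|u\|_{\infty,K}$ by the triangle inequality together with the two hypotheses available ($\lambda_h$ on the FM part, $C_h^{\FM}$ on the error part). The overall bound is then obtained by a Cauchy--Schwarz style combination and taking the infimum.

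More concretely, I would proceed as follows. Fix $u\in V^L(K)$ and write $u=\Pi^{\FM}_h u + (u-\Pi^{\FM}_h u)$. Since $u(p_i)=0$ and the Fujino--Morley interpolation preserves vertex values, $\Pi^{\FM}_h u$ belongs to $V^{\FM}_h(K)$, so the definition of $\lambda_h$ in \eqref{eq:def-lambda-lambda_h} yields
\begin{equation*}
\norm{\Pi^{\FM}_h u}_{\infty,K} \le \frac{1}{\sqrt{\lambda_h}}\,\abs{\Pi^{\FM}_h u}_{2,K}.
\end{equation*}
The hypothesis \eqref{eq:CFM_prop} gives in turn
\begin{equation*}
\norm{u - \Pi^{\FM}_h u}_{\infty,K} \le C_h^{\FM}\,\abs{u - \Pi^{\FM}_h u}_{2,K}.
\end{equation*}
By the triangle inequality and Cauchy--Schwarz applied to the 2-vectors $(1/\sqrt{\lambda_h},\,C_h^{\FM})$ and $(\abs{\Pi^{\FM}_h u}_{2,K},\,\abs{u-\Pi^{\FM}_h u}_{2,K})$,
\begin{equation*}
\norm{u}_{\infty,K}^2 \le \Bigl(\tfrac{1}{\lambda_h} + (C_h^{\FM})^2\Bigr)\Bigl(\abs{\Pi^{\FM}_h u}_{2,K}^2 + \abs{u-\Pi^{\FM}_h u}_{2,K}^2\Bigr).
\end{equation*}
The orthogonality relation \eqref{eq:FM-property} (applied with $v_h=\Pi^{\FM}_h u\in V^{\FM}_h(K)$, which is valid because $\Pi^{\FM}_h u$ is the projection with respect to $\langle\cdot,\cdot\rangle_h$) collapses the second factor to $\abs{u}_{2,K}^2$, giving
\begin{equation*}
\norm{u}_{\infty,K}^2 \le \frac{1+(C_h^{\FM})^2\lambda_h}{\lambda_h}\,\abs{u}_{2,K}^2.
\end{equation*}
Rearranging and taking the infimum over $u\in V^L(K)$ yields \eqref{eq:lambda_CFM}.

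The only subtle point, and the place I expect to need the most care in the write-up, is confirming that $\Pi^{\FM}_h u\in V^{\FM}_h(K)$ for every $u\in V^L(K)$ so that $\lambda_h$ can legitimately be applied to it; this hinges on the vertex-interpolation condition built into $V^{\FM}_h(K)$ matching the vanishing condition $u(p_i)=0$ in $V^L(K)$, together with the standard fact that the Fujino--Morley interpolant is well-defined for $H^2$-functions. Everything else is routine: the pairing of the two infinity-norm bounds with the 2D Cauchy--Schwarz inequality is what produces the exact form $1/\lambda_h + (C_h^{\FM})^2$, and hence the precise shape of the denominator in \eqref{eq:lambda_CFM}.
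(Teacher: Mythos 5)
Your proposal is correct and follows essentially the same route as the paper's own proof: decompose $u$ as $\Pi^{\FM}_h u + (u-\Pi^{\FM}_h u)$, bound the two pieces in the maximum norm by $\lambda_h$ and $C_h^{\FM}$ respectively, combine via the triangle inequality and two-term Cauchy--Schwarz, and collapse the resulting sum of squared seminorms to $\abs{u}_{2,K}^2$ using the orthogonality \eqref{eq:FM-property}. The point you flag as subtle --- that $\Pi^{\FM}_h u \in V^{\FM}_h(K)$ because the vertex conditions match --- is indeed the only hypothesis check needed, and the paper uses it implicitly in the same way.
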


\begin{proof}
For any $u \in V^L(K)$, noting that $\abs{\Pi^{\FM}_h u}_{2,K} \ge \sqrt{\lambda_h}\norm{\Pi^{\FM}_h u}_{\infty,K}$ and applying the inequality (\ref{eq:CFM_prop}), we have
\begin{align*}
\norm{u}_{\infty,K}& = \norm{\Pi_h^{\FM}u + u-\Pi_h^{\FM}u}_{\infty,K}\\
& \le \norm{\Pi_h^{\FM}u}_{\infty,K} + \norm{u-\Pi_h^{\FM}u}_{\infty,K}\\
& \le \frac{\abs{\Pi_h^{\FM}u}_{2,K}}{\sqrt{\lambda_h}} + C_h^{\FM}\abs{u-\Pi_h^{\FM}u}_{2,K}\\
& \le \sqrt{\frac{1}{\lambda_h} + (C_h^{\FM})^2} \sqrt{\abs{\Pi_h^{\FM}u}^2_{2,K} + \abs{u-\Pi_h^{\FM}u}^2_{2,K}}~.
\end{align*}
From the orthogonality in \eqref{eq:FM-property}, we have
$$
\abs{\Pi_h^{\FM}u}^2_{2,K} + \abs{u-\Pi_h^{\FM}u}^2_{2,K} = \abs{u}_{2,K}^2.
$$
Thus,
$$
\norm{u}_{\infty,K} \le \sqrt{\frac{1+(C_h^{\FM})^2\lambda_h}{\lambda_h}}\abs{u}_{2,K},~~ \forall u \in V^L(K)\:.
$$
From the definition of $\lambda$ in \eqref{eq:def-lambda-lambda_h}, we draw the conclusion.
\end{proof}
 
To apply Theorem~\ref{th-main} for bounding $\lambda$, an explicit value of $C_h^{\FM}$ is needed. Below, let us describe the way to obtain this explicit value by utilizing the raw bound of $C^L(\alpha,\theta)$.

\subsection{Explicit estimation of $C^{\FM}_h$}

To have an explicit value of $C^{\FM}_h$, we first define the quantity $C^{\FM}_{res}(T)$ for each element $T$ in the triangulation $\mathcal{T}^h$:

$$
C^{\FM}_{res}(T) := \sup_{u \in H^2(T)} \frac{\norm{u-\Pi_h^{\FM}u}_{\infty,T}}{\abs{u-\Pi_h^{\FM}u}_{2,T}} = \sup_{w \in W_1} \frac{\norm{w}_{\infty,T}}{\abs{w}_{2,T}}.
$$
Here, $W_1 := \left\{w \in H^2(T) \mid w(p_i) = 0, \ \int_{e_i} \frac{\partial w}{\partial n }\mbox{d}s = 0 \ (i = 1,2,3) \right\}$. 
Noticing that  $W_1 \subseteq W_2$ for $W_2 := \left\{ w \in H^2(T) \mid w(p_i) = 0 \ (i=1,2,3) \right\}$, from the definition of $C^L$ in (\ref{def:constant-CL-def}), we have
$$
C^{\FM}_{res}(T) \le \sup_{w \in W_2} \frac{\norm{w}_{\infty,T}}{\abs{w}_{2,T}} = C^{L}(T).
$$
Then, the following $C^{\FM}_h$ with an upper bound makes certain (\ref{eq:CFM_prop}) holds:
\begin{equation}
\label{eq:def-Ch-FM}
    C^{\FM}_h := \max_{T \in \mathcal{T}^h} C^{\FM}_{res}(T)
    \left(
     \le 
    \max_{T \in \mathcal{T}^h} C^L(T) \right) ~. 
\end{equation}

\begin{figure}[htp]
\begin{center}
	\includegraphics[scale=0.4]{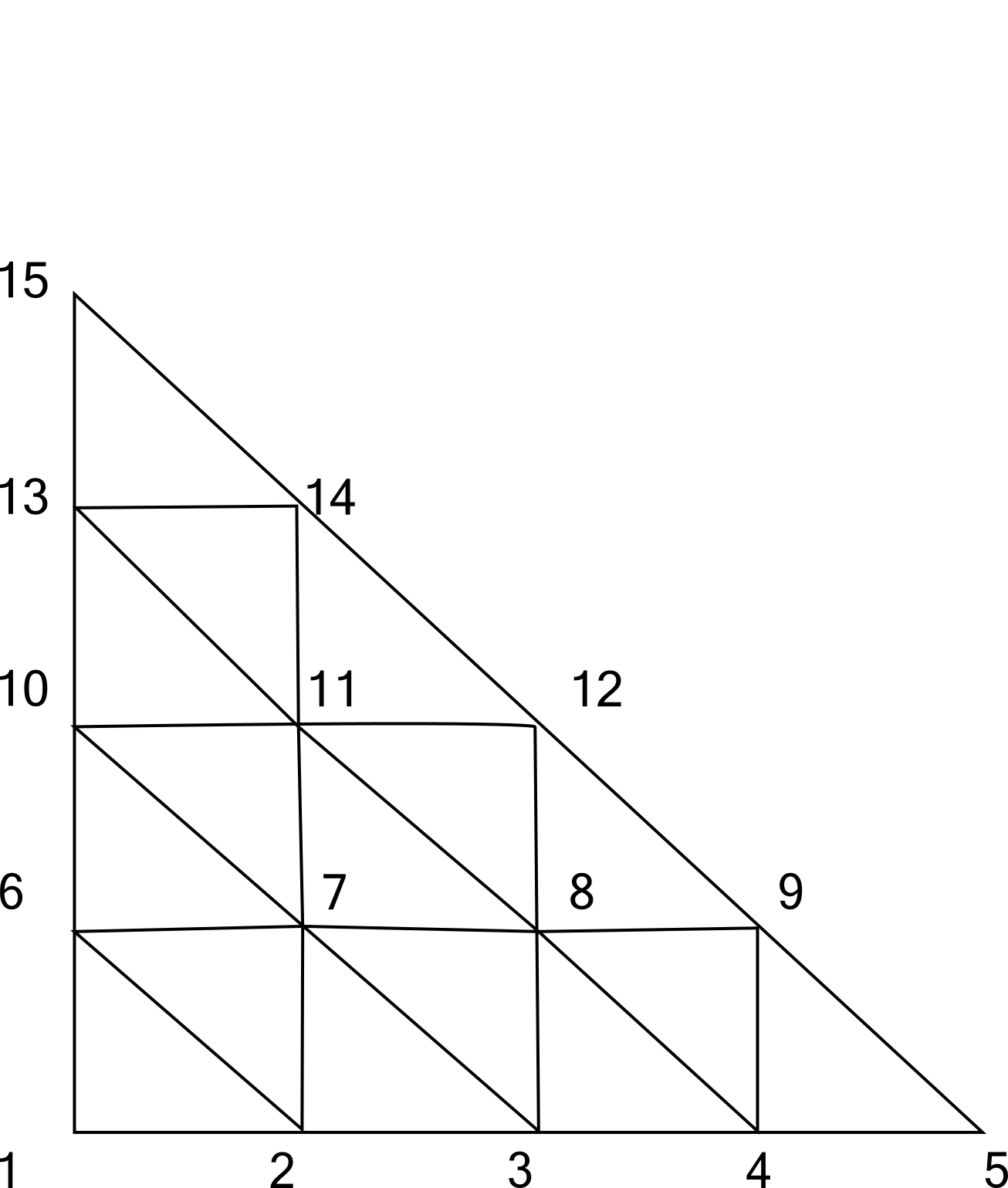}
\end{center}
\caption{A uniform triangulation of a right isosceles triangle}
\label{fig:unifmesh}
\end{figure}
\begin{remark}
Let $\mathcal{T}^h$ be a uniform triangulation of a right isosceles triangle; see a sample mesh in Figure \ref{fig:unifmesh}. We choose an explicit upper bound of $C^{\FM}_h $ as $C^{\FM}_h \le 1.3712h$, since for each $T \in \mathcal{T}^h$, $C^{\FM}_{res} \le C^L(T) \le 1.3712h$, where $h$ is the leg length of each right triangle element.
\end{remark}

\subsection{Estimation of $\lambda_h$ by solving finite dimensional optimization problem}
In this part, we present a method to estimate $\lambda_h$, which is required in Theorem \ref{th-main} for bounding $\lambda$. Let $M := \mbox{Dim}(V^{\FM}_h)$. The estimation of $\lambda_h$ is equivalent to finding the solution to the optimization problem
\begin{equation}
\label{eq:optimprob}
\lambda_h =\mbox{min}\quad \mathbf{x}^T \mathbf{A} \mathbf{x}, \quad \mbox{subject to}\quad \norm{\sum_{i=1}^M \mathbf{x}_i \phi_i}_{\infty,K} \ge 1 ~,
\end{equation}
where the components $a_{ij}$ of $\mathbf{A}$ are given by $a_{ij} = \langle \phi_i, \phi_j \rangle_h$, $\set{\phi_i}_{i=1,\ldots,M}$ are the basis functions for the Fujino--Morley space $V^{\FM}_h$, and $\mathbf{x}\in\mathbb{R}^M$ denotes the Fujino--Morley coefficient vector of $u_h \in V^{\FM}_h$.

To solve the optimization problem \eqref{eq:optimprob} is not an easy work since the $L^\infty$-norm of the function appears in the constraint. Here, we introduce the technique to apply Bernstein polynomials and their convex-hull property to solve the problem. Strictly speaking, a new optimization problem \eqref{eq:optimprob2} utilizing the Bernstein polynomials will be formulated to provide a lower bound for the solution of \eqref{eq:optimprob}.

\medskip 

As a preparation, let us introduce the definition of Bernstein polynomials along with the convex-hull property; refer to, e.g., \cite{Chang-1984, Farouki-2012} for detailed discussion.

\paragraph{Convex-hull property of Bernstein polynomials}
Given a triangle $K$, let $(u,v,w)$ be barycentric coordinates for a point $x$ in $K$. A Bernstein polynomial $u_h$ of degree $n$ over a triangle $K$ is defined by 
$$
u_h := \sum_{i+j+k = n} d_{i,j,k} J^{(n)}_{i,j,k}, \quad J^{(n)}_{i,j,k}(x) := \frac{n!}{i!j!k!} u^i v^j w^k~.
$$
Here, $J^{(n)}_{i,j,k}(x)$ are the Bernstein basis polynomials; the coefficients $d_{i,j,k}$ are the control points of $u_h$. Noticing that 
$$
J^{(n)}_{i,j,k}\ge 0,\quad 
\sum_{i+j+k = n} J^{(n)}_{i,j,k}=1,
$$
we can easily obtain the following convex-hull property of Bernstein polynomials:
$$
\norm{u_h}_{\infty,K} \le \max \abs{d_{i,j,k}}~.
$$

Given $u_h \in V^{\FM}_h(K)$, for each $T \in \mathcal{T}^h$, $u_h|_T \in P_2(T)$ can be represented by the Bernstein basis polynomials. Let $\mathbf{B}$ be the $N \times M$ matrix that transforms the Fujino--Morley coefficients $\mathbf{x}$ to the Bernstein coefficients $d^B$. Note that $u_h$ is regarded as a piecewise Bernstein polynomial so that its Bernstein coefficient vector $d^B$ has the dimension $N=6\times \#
\{elements\}$. From the convex-hull property of the Bernstein polynomials, the following inequality holds:
%
%
$$
1 \le \norm{\sum_{i=1}^M \mathbf{x}_i \phi_i}_{\infty,K} \le \norm{\mathbf{Bx}}_{\infty}. 
$$
Based on this inequality, we propose a new optimization by relaxing the constraint condition of \eqref{eq:optimprob}:
\begin{equation}
\label{eq:optimprob2}
\lambda_{h,B} = \mbox{min}~\mathbf{x}^T \mathbf{Ax}, \quad
\mbox{subject to}\quad  \norm{\mathbf{Bx}}_{\infty} \ge 1 ~.
\end{equation}
The solution to problem (\ref{eq:optimprob2}) provides a lower bound for \eqref{eq:optimprob}, i.e., $\lambda_{h} \ge \lambda_{h,B}$.

Below, we propose an algorithm to solve the problem (\ref{eq:optimprob2}). Since $\mathbf{A}$ is positive definite, let us consider the Cholesky decomposition of $\mathbf{A}$: $\mathbf{A} = \mathbf{R}^T\mathbf{R}$, where $\mathbf{R}$ is an $M\times M$ upper triangular matrix. Then, by letting $\mathbf{y} := \mathbf{Rx}$ and $\mathbf{\widehat{B}} := \mathbf{BR}^{-1}$, problem \eqref{eq:optimprob2} becomes
\begin{equation}
\label{eq:optimprob3}
\lambda_{h,B}=\mbox{min}~ \mathbf{y}^T \mathbf{y}, 
\quad
\mbox{subject to}\quad  \norm{\widehat{\mathbf{B}} \mathbf{y}}_{\infty} \ge 1~.
\end{equation}

The following lemma shows the solution for problem \eqref{eq:optimprob3}.
\begin{lem}\label{lem:optimsoln}\footnote{Appreciation to Tamaki TANAKA and Syuuji YAMADA from Faculty of Science, Niigata University for their idea of solving this problem in an efficient way.}
Let ${b^T_i}$ ($i=1,\ldots,N$) be the $i$th row of $\widehat{\mathbf{B}}$ and $b^T_{max}$ be a row of $\widehat{\mathbf{B}}$ satisfying $\norm{b_{max}}_2 = \max_{i=1,\ldots,N}\norm{b_i}_2$. Then, the optimal value of problem (\ref{eq:optimprob3}) is given by
$$
\lambda_{h,B} =\frac{1}{\norm{b_{max}}_2^2}.
$$
\end{lem}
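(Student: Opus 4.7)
The plan is to observe that the constraint $\|\widehat{\mathbf{B}}\mathbf{y}\|_\infty \ge 1$ is equivalent to requiring that at least one of the scalar conditions $|b_i^T \mathbf{y}| \ge 1$ (for $i = 1,\ldots,N$) holds. Consequently, the feasible set of \eqref{eq:optimprob3} decomposes as a union
$$
F = \bigcup_{i=1}^{N} F_i, \quad F_i := \set{\mathbf{y} \in \mathbb{R}^M \mid |b_i^T \mathbf{y}| \ge 1},
$$
so that
$$
\lambda_{h,B} \;=\; \min_{\mathbf{y} \in F} \mathbf{y}^T \mathbf{y} \;=\; \min_{1 \le i \le N} \, \min_{\mathbf{y} \in F_i} \mathbf{y}^T \mathbf{y}.
$$
The strategy is then to solve each inner minimization in closed form and take the best index afterwards.

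For the inner problem, I would use the Cauchy--Schwarz inequality: for any $\mathbf{y} \in F_i$,
$$
1 \le |b_i^T \mathbf{y}| \le \norm{b_i}_2 \norm{\mathbf{y}}_2,
$$
which yields $\mathbf{y}^T \mathbf{y} \ge 1/\norm{b_i}_2^2$. Equality holds when $\mathbf{y}$ is parallel to $b_i$, concretely at $\mathbf{y}^\ast = b_i/\norm{b_i}_2^2$, which is feasible since $b_i^T \mathbf{y}^\ast = 1$. Hence $\min_{\mathbf{y}\in F_i} \mathbf{y}^T \mathbf{y} = 1/\norm{b_i}_2^2$.

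Taking the minimum over $i$ then gives
$$
\lambda_{h,B} = \min_{1 \le i \le N} \frac{1}{\norm{b_i}_2^2} = \frac{1}{\max_{1 \le i \le N}\norm{b_i}_2^2} = \frac{1}{\norm{b_{max}}_2^2},
$$
which is the claimed formula. There is no real obstacle here: the main subtlety is simply recognizing that the $L^\infty$-type constraint disjunctively splits into $N$ half-space complements, and that each resulting subproblem is a textbook constrained least-squares problem solved sharply by Cauchy--Schwarz. I would also note explicitly that the optimum is attained (so $\min$, not $\inf$, is justified), since each $F_i$ is closed and non-empty and the objective is coercive.
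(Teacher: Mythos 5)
Your proposal is correct and uses essentially the same argument as the paper: the paper likewise exhibits the feasible point $\bar{\mathbf{y}} = b_{max}/\norm{b_{max}}_2^2$ to get the upper bound and applies Cauchy--Schwarz to every feasible $\mathbf{y}$ to get the matching lower bound; your decomposition of the feasible set into the union of the $F_i$ is just a slightly more structured packaging of the same two steps.
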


\begin{proof}
Let $S := \left\{\mathbf{y} \mid \norm{\widehat{\mathbf{B}}\mathbf{y}}_\infty \ge 1\right\}$ and $\bar{\mathbf{y}} := \norm{b_{max}}_2^{-2}b_{max}$. Then, we have $\bar{\mathbf{y}}\in S$ because
$$
\norm{\widehat{\mathbf{B}}\bar{\mathbf{y}}}_\infty = \max_{i=1,\ldots,N} \abs{{b^T_i}\bar{\mathbf{y}}} \ge \abs{{b^T_{max}}\bar{\mathbf{y}}} = 1.
$$
Hence, 
\begin{equation}
\label{eq:lemproof1}
\min_{\mathbf{y}\in S} \mathbf{y}^T\mathbf{y} \le \bar{\mathbf{y}}^T\bar{\mathbf{y}} = \frac{1}{\norm{b_{max}}_2^{2}}.
\end{equation}
For any $\mathbf{y} \in S$, from the Cauchy--Schwarz inequality,
$$
1 \le \max_{i=1,\ldots,N} \abs{{b_i}^T\mathbf{y}} \le \max_{i=1,\ldots,N}\norm{b_i}_2 \norm{\mathbf{y}}_2 = \norm{b_{max}}_2 \norm{\mathbf{y}}_2.
$$
Thus,
\begin{equation}
\label{eq:lemproof2}
\frac{1}{\norm{b_{max}}_2^2} \le \min_{\mathbf{y}\in S} \mathbf{y}^T\mathbf{y}.
\end{equation}
From \eqref{eq:lemproof1} and \eqref{eq:lemproof2}, we draw the conclusion.
\end{proof}

Note that the diagonal elements of $\mathbf{B}\mathbf{A}^{-1}\mathbf{B}^T=\widehat{\mathbf{B}}\widehat{\mathbf{B}}^T$ correspond to $\|b_i\|_2^2$ ($i=1,\cdots, N$). Therefore, we can solve problem (\ref{eq:optimprob2}) without performing the Cholesky decomposition of $\mathbf{A}$, as shown by the following lemma.

\begin{lem}\label{lem:opti_problem_lower_bound}
Let $\mathbf{D}:=\mathbf{B} \mathbf{A}^{-1}\mathbf{B}^T$.
The optimal value of  (\ref{eq:optimprob2}) is given by
$$
\lambda_{h,B}=\frac{1}{\max(\mathrm{diag}(\mathbf{D}))},
$$
where $\mathrm{diag}(\mathbf{D})$ is the diagonal elements of $\mathbf{D}$.
\end{lem}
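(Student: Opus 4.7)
The plan is to reduce this statement directly to Lemma~\ref{lem:optimsoln}, which already identifies $\lambda_{h,B}$ with $1/\|b_{max}\|_2^2$, where $b_i^T$ denotes the $i$th row of $\widehat{\mathbf{B}} = \mathbf{B}\mathbf{R}^{-1}$ coming from the Cholesky decomposition $\mathbf{A} = \mathbf{R}^T\mathbf{R}$. Since Lemma~\ref{lem:optimsoln} has done the optimization work already, the only remaining task is the purely linear-algebraic identification of $\|b_i\|_2^2$ with the diagonal entries of $\mathbf{D}$.

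Concretely, I would first observe that
$$
\widehat{\mathbf{B}}\widehat{\mathbf{B}}^T = \mathbf{B}\mathbf{R}^{-1}(\mathbf{R}^{-1})^T\mathbf{B}^T = \mathbf{B}(\mathbf{R}^T\mathbf{R})^{-1}\mathbf{B}^T = \mathbf{B}\mathbf{A}^{-1}\mathbf{B}^T = \mathbf{D}.
$$
Next, I would note that the $(i,i)$ entry of $\widehat{\mathbf{B}}\widehat{\mathbf{B}}^T$ is exactly $b_i^T b_i = \|b_i\|_2^2$, so
$$
\mathrm{diag}(\mathbf{D})_i = \|b_i\|_2^2, \qquad i = 1,\ldots,N.
$$
Taking the maximum over $i$ gives $\max(\mathrm{diag}(\mathbf{D})) = \|b_{max}\|_2^2$, which combined with Lemma~\ref{lem:optimsoln} yields the claim.

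There is essentially no obstacle here; the lemma is a convenient computational reformulation that avoids an explicit Cholesky factorization of $\mathbf{A}$. The only mildly subtle point worth mentioning in the write-up is why the Cholesky decomposition exists in the first place, which is justified by the positive definiteness of $\mathbf{A} = (\langle \phi_i,\phi_j\rangle_h)_{ij}$ as a Gram matrix of linearly independent basis functions of $V^{\FM}_h$ under the discrete $H^2$-inner product. With that observation the proof is a two-line identity followed by an appeal to the previous lemma.
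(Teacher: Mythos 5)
Your proposal is correct and follows exactly the paper's own reasoning: the paper derives this lemma from Lemma~\ref{lem:optimsoln} via the same identity $\mathbf{B}\mathbf{A}^{-1}\mathbf{B}^T=\widehat{\mathbf{B}}\widehat{\mathbf{B}}^T$, whose diagonal entries are $\|b_i\|_2^2$. Your additional remarks on the existence of the Cholesky factorization and the explicit computation $\mathbf{R}^{-1}(\mathbf{R}^{-1})^T=(\mathbf{R}^T\mathbf{R})^{-1}$ merely spell out details the paper leaves implicit.
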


Theorem~\ref{th-main} gives a lower bound for $\lambda$. Since $C^L(K) = \sqrt{\lambda(K)}^{-1}$, this lower bound is used to obtain an upper bound for $C^L(K)$. 
Below, let us summarize the procedure to obtain a lower bound for $\lambda$.

\medskip

{\bf Algorithm for calculating lower bound of $\lambda(K)$}

\begin{enumerate}
    \item[a.] Set up the FEM space $V^{\FM}_h(K)=\mbox{span}\{\phi_i\}_{i=1}^M$ over a triangulation of the triangle domain $K$.
    \item[b.] Assemble the global matrix $\mathbf{A} = \left ( a_{ij} \right)_{M \times M}$ ($a_{ij} = \langle \phi_i, \phi_j \rangle_h$) and the transformation matrix $\mathbf{B}$ from Fujino--Morley coefficients to Bernstein coefficients.
    \item[c.] Apply Lemma~\ref{th-raw} to obtain a raw bound for $C^{\FM}_h$.
    \item[d.] Apply Lemma~\ref{lem:optimsoln} or Lemma~\ref{lem:opti_problem_lower_bound} to calculate $\lambda_{h,B} (\le \lambda_h)$.
    \item[e.] The lower bound for $\lambda$ is obtained through Theorem~\ref{th-main} by using $\lambda_{h,B}$ and the upper bound of $C^{\FM}_h$.
\end{enumerate}

\medskip

Using uniform triangulation of a domain $K$, a direct estimation of the lower bound for $\lambda$ without using $C^{\FM}_h$ is available.

\begin{cor}\label{th-cor}
For a uniform triangulation of $K=K_{\alpha,\theta,h}$ with $N$ subdivisions for each side, the following holds:
\begin{equation}
    \label{eq:lambda_lower_bound_uniform_mesh}
    \lambda(K) \ge \lambda_h(1-(1/N)^2).
\end{equation}
\end{cor}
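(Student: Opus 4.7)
The plan is to apply Theorem~\ref{th-main} with an explicit bound on $C_h^{\FM}$ that exploits the self-similarity of the uniform triangulation.

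First I would observe that under a uniform triangulation of $K = K_{\alpha,\theta,h}$ with $N$ subdivisions along each side, every subelement $T\in\mathcal{T}^h$ is congruent (up to a $180^\circ$ rotation) to $K_{\alpha,\theta,h/N}$, and is therefore similar to $K$ with scale factor $1/N$. The scaling identity $C^L(\alpha,\theta,h) = h\,C^L(\alpha,\theta,1)$ recorded just after (\ref{def:constant-CL-def}) then gives $C^L(T) = C^L(K)/N$ for every $T \in \mathcal{T}^h$. Since (\ref{eq:def-Ch-FM}) allows $\max_T C^L(T)$ to serve as $C_h^{\FM}$, the choice $C_h^{\FM} = C^L(K)/N$ is admissible in Theorem~\ref{th-main}.

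Next I would feed this bound into the inequality (\ref{eq:lambda_CFM}). Using the identity $C^L(K)^2 = 1/\lambda(K)$, the right-hand side of (\ref{eq:lambda_CFM}) becomes
$$
\lambda(K) \ge \frac{\lambda_h}{1 + \lambda_h/(N^2\lambda(K))}.
$$
Multiplying both sides by the positive factor $1 + \lambda_h/(N^2\lambda(K))$ and simplifying collapses this to $\lambda(K) + \lambda_h/N^2 \ge \lambda_h$, which is exactly the claimed bound $\lambda(K) \ge \lambda_h(1 - 1/N^2)$.

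The only delicate point is that the admissible value $C_h^{\FM} = C^L(K)/N$ already involves the unknown $\lambda(K)$, so the derived inequality is self-referential. This is not circular: Theorem~\ref{th-main} holds for any $C_h^{\FM}$ satisfying (\ref{eq:CFM_prop}), and $C^L(K)/N$ is finite whenever $K$ is non-degenerate, so the substitution is legitimate; the remaining algebra is a one-step linear rearrangement rather than a fixed-point argument. The conceptual content is simply that a uniform refinement shrinks $C_h^{\FM}$ in proportion to $1/N$ times the very constant one is trying to bound, which is precisely what produces the clean factor $1-1/N^2$.
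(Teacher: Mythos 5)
Your proposal is correct and follows essentially the same route as the paper: both arguments bound $C_h^{\FM}$ by $C^L(T)=C^L(K)/N$ using the similarity of the subelements to $K$, substitute into the bound of Theorem~\ref{th-main} via the identity $(C^L(K))^2=1/\lambda(K)$, and rearrange the resulting self-referential inequality. Your explicit remark that the substitution is not circular is a welcome clarification of a point the paper leaves implicit, but it does not change the argument.
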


\begin{proof}
Since $(C^L(K))^2 = 1/\lambda(K)$ and each $T \in \mathcal{T}^h$ is similar to $K$, we have, 
$$
\lambda (K) \ge \frac{\lambda_h}{1+ (C_h^{\FM})^2  \lambda_h } \ge \frac{\lambda_h}{1+(C^L(T))^2 \lambda_h } = \frac{\lambda_h}{ 1 + (1/N)^2 \lambda_h/\lambda(K)}.
$$
The conclusion is achieved by sorting the inequality.
\end{proof}

\begin{remark}
Theoretically, for a refined uniform triangulation, the lower bound (\ref{eq:lambda_CFM}) using $C_h^{\FM}$ is sharper (i.e., larger) than (\ref{eq:lambda_lower_bound_uniform_mesh}). This fact can be confirmed by utilizing the following relation:
\begin{align}
\label{eq:lower-bound-comparison}
\frac{\lambda_h}{1+ (C_h^{\FM})^2  \lambda_h }  \ge 
\lambda_h(1-(1/N)^2)
\iff
  1 \ge (N^2-1)  (C_h^{\FM})^2 \lambda_h ~.
\end{align}
For a small value of $h=1/N$, we have
$$
(N^2-1) (C_h^{\FM})^2 \approx  (N C_h^{\FM})^2 = (C_{res}^{\FM}(T))^2, ~ \lambda_h\approx \lambda=(C^L(T))^{-2}.
$$
Thus, the second equality of (\ref{eq:lower-bound-comparison}) holds due to $C_{res}^{\FM}(T) < C^L(T)$.
However, in practical computation, the raw estimate of $C_{res}^{\FM}(T)$ will cause a worse bound of $\lambda$ than (\ref{eq:lambda_lower_bound_uniform_mesh}).
\end{remark}

Using Corollary \ref{th-cor}, the following steps are modified from the algorithm to obtain a lower bound for $\lambda$,  without using the quantity of $C_h^{\FM}$:

\medskip

{\bf Revision of algorithm for calculating lower bound of $\lambda(K)$}

\begin{enumerate}
    \item[c*.] Apply Lemma~\ref{lem:optimsoln} or Lemma~\ref{lem:opti_problem_lower_bound} to calculate $\lambda_{h,B} (\le \lambda_h)$.
    \item[d*.] Solve the lower bound for $\lambda$ using Corollary \ref{th-cor} along with $\lambda_{h,B}$.
\end{enumerate}

\begin{remark}
To compare the efficiencies of the two formulas \eqref{eq:lambda_CFM} and \eqref{eq:lambda_lower_bound_uniform_mesh}, we apply them to estimate $\lambda$ for a unit right isosceles $K_{1,\pi/2}$. By using uniform triangulation of size $h=1/64$, the estimate \eqref{eq:lambda_CFM} gives $\lambda \ge 5.7659$ and \eqref{eq:lambda_lower_bound_uniform_mesh} gives a sharper bound as $\lambda \ge 5.7798$.
Hence, a sharper upper bound is obtained using \eqref{eq:lambda_lower_bound_uniform_mesh} and we have the following estimation:
$$
\norm{u-\Pi^L u}_{\infty,K_{1,\pi/2,h}} \le 0.41595 h \abs{u}_{2,K_{1,\pi/2,h}}.
$$
As a comparison, the  result \eqref{eq:bound-rit} will yield
a raw bound as $C^L(1,\pi/2,h) \le 1.3712h$. 
\end{remark}



For a triangle $K_{\alpha,\theta}$ with two fixed vertices $p_1(0,0), p_2(1,0)$, let us vary the vertex $p_3(x, y)$ and calculate the approximate value of $C^L(\alpha,\theta)$ for each position of $p_3$. Note that $C^L$ can be regarded as a function with respect to the coordinate $(x,y)$ of $p_3$, which is denoted by $C^L(x,y)$. In Figure \ref{fig:contour}, we draw the contour lines of $C^L(x,y)$ where the abscissa and the ordinate denote $x$- and $y$- coordinates of $p_3$, respectively.

\begin{figure}[htp]
\begin{center}
	\includegraphics[scale=0.5]{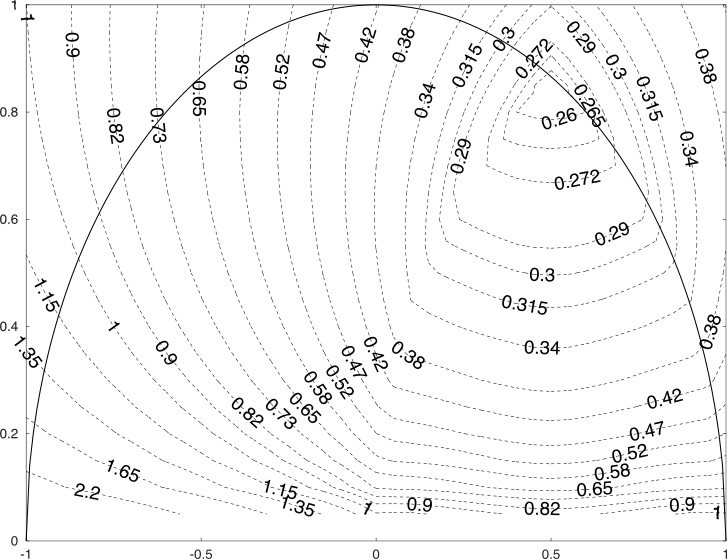}
\end{center}
\caption{\label{fig:contour}Contour lines of $C^L(\alpha,\theta)$ w.r.t. vertex $p_3(x,y)$}
\end{figure}

\subsection{Lower bound of the constant}\label{subsec3}

To confirm the precision of the obtained estimation for the Lagrange interpolation constant, the lower bounds of the constants are calculated. Let $u_h$ be the function obtained by numerical computation solving the minimization problem.  To obtain the lower bound, an appropriate polynomial $f$ over $K$ of higher degree $d$ is selected by solving the minimization problem below:
$$
\min_{f \in P_d(K)} \sum_{i=1}^n \abs{f(p_i) - u_h(p_i)}^2 \quad (n: \# \{\mbox{nodes of triangulation}\} )
$$
where $p_i$ denote the nodes of the triangulation of $K$. 
From the definition of $\lambda(K)$ in (\ref{eq:def-lambda-lambda_h}) and the relation  $C^L(K)=1/\sqrt{\lambda(K)}$, we have a lower bound of $C^L(K)$ as follows:
$$
C^L(K) \ge \frac{\norm{f}_{\infty,K}}{\abs{f}_{2,K}}~.
$$
\begin{remark}
For the unit right isosceles triangle $K_{1, \pi/2}$, the upper bound for the constant is obtained by solving the optimization problem with mesh size $1/64$. Meanwhile, the lower bound of the constant is obtained by using a polynomial of degree $9$. The two-side bounds reads:
$$
 0.40432 \le C^L\left(1,\frac{\pi}{2}\right) \le 0.41595.
$$
\end{remark}

\section{Numerical results and conclusion}\label{sec4}
In this section, we 
perform numerical computation to obtain the estimation of the interpolation error constant $C^L(K)$ for triangles of various shapes.

First, let us confirm the shape of the function $u_h$ that solves the minimization problem for $\lambda_{h,B}$ in case $K$ being the unit isosceles right triangle. The contour lines of $u_h$ are displayed in Figure \ref{fig:contour-of-uh-for-lambda_h_B}. The numerical computation tells that the maximum value of $u_h$ happens on the midpoint of the hypotenuse of $K$. Note that the maximum value of $u_h$ is around $0.95$ while the maximum of its Bernstein coefficients is above $1$. 

\begin{figure}[htp]
\begin{center}
	\includegraphics[width=5.5cm, height=5cm]{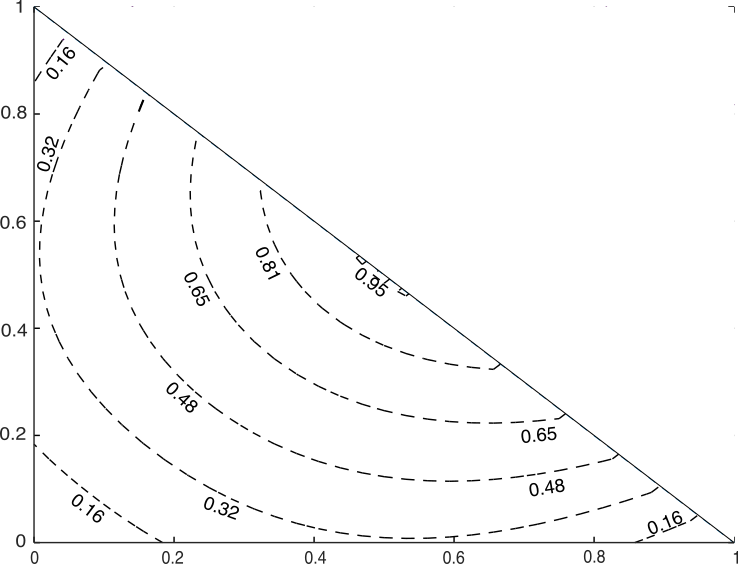}
\end{center}
\caption{\label{fig:contour-of-uh-for-lambda_h_B} The contour lines of the minimizer $u_h$ of (\ref{eq:optimprob2}) for  $K_{1,\pi/2}.$}
\end{figure}

Let us also compare the lower bounds of $\lambda$ obtained through Theorem \ref{th-main} and Corollary \ref{th-cor} for various triangles. Table \ref{tab:comparison} tells that the values obtained using Corollary \ref{th-cor} gives a sharper estimate of $\lambda$.   

Table \ref{tab:upperlowerbounds2} summarizes the results for the lower and upper bounds of the constant for different types of triangle $K_{1,\theta}$ with the mesh size as $h=1/32$ and $h=1/64$. The upper bounds (denoted by $C^L_{ub}$) are obtained through Corollary \ref{th-cor}, while the lower bounds 
(denoted by $C^L_{lb}$) 
are obtained by using high-degree polynomials with degree denoted by $d$. 

Figure \ref{fig:bounds} demonstrates the convergency of the upper and lower bounds of the interpolation error constant as the mesh is refined. It implies that the convergency order of upper bounds depends on the shape of the triangles.
The theoretical analysis on the efficiency and the convergency of the algorithm
in solving the optimization problem is beyond the scope of this paper and will be systematically investigated in our succeeding research.



\begin{table}[h]
    \centering
    \caption{The lower bounds for $\lambda$  through Theorem \ref{th-main} and Corollary \ref{th-cor}.}    \begin{tabular}{|c|c|c|c|c|c|c|}
         \hline
         \rule[-2mm]{0mm}{5mm}{}
         $\theta$ &  \multicolumn{3}{|c|}{$h=1/32$} &
         \multicolumn{3}{c|}{$h=1/64$} \\
         \hline
         \rule[-2mm]{0mm}{5mm}{}
        & $\lambda_{h,B}$ & Thm. 3.1 & Cor. 3.1 & $\lambda_{h,B}$ & Thm. 3.1 & Cor. 3.1 \\
        \hline
        \rule[-2mm]{0mm}{5mm}{}
        $\pi/6$ & $9.8339$ & $8.7356$ & $ 9.8245$ & $9.8925$ & $9.5892$ & $9.8901$  \\
        \hline
        \rule[-2mm]{0mm}{5mm}{}
        $\pi/4$ & $13.517$ & $12.263$ & $13.505$ & $13.574$ & $13.234$ & $13.570$ \\
        \hline
        \rule[-2mm]{0mm}{5mm}{}
        $\pi/3$  & $15.412$ & $14.357$ & $15.397$ & $15.457$ & $15.177$ & $15.454$ \\
        \hline
        \rule[-2mm]{0mm}{5mm}{}
        $\pi/2$  & $5.5988$ & $5.5418$ & $5.5933$ & $5.7812$ & $5.7660$ & $5.7799$ \\
        \hline
        \rule[-2mm]{0mm}{5mm}{}
        $2\pi/3$ & $2.3954$ & $2.3683$ & $2.3930$ & $2.5511$ & $2.5433$ & $2.5504$ \\
        \hline
        \rule[-2mm]{0mm}{5mm}{}
        $3\pi/4$ &  $1.5550$  & $ 1.5369$ & $1.5534$ & $1.6768$ & $1.6715$ & $1.6764$ \\
        \hline
        \rule[-2mm]{0mm}{5mm}{}
        $5\pi/6$ & $0.93778$ & $0.92669$ & $0.93687$ & $1.0212$ & $1.0179$ & $1.0210$ \\
        \hline
    \end{tabular}

    \label{tab:comparison}
\end{table}

\begin{table}[h]
    \centering
    \caption{The lower and upper bounds of $C^L(1,\theta)$ for triangles of different shapes}    \begin{tabular}{|c|c|c||c|c|c|c||c|c|}
         \hline
         \rule[-2mm]{0mm}{5mm}{}
         $\theta$ & \multicolumn{4}{|c|}{$h=1/32$} &\multicolumn{4}{c|}{$h=1/64$} \\
         \hline
         \rule[-2mm]{0mm}{5mm}{}
        &  $d$ & $C^L_{lb}$ & $\lambda_{h,B}$ & $C^L_{ub}$ & $d$ & $C^L_{lb}$ & $\lambda_{h,B}$&  $C^L_{ub}$ \\
        \hline
        \rule[-2mm]{0mm}{5mm}{}
        $\pi/6$ & $9$ & $0.31511$ & $9.8339$ & $0.31904$ & $9$ & $0.31423$ & $9.8925$ & $0.31798$ \\
        \hline
        \rule[-2mm]{0mm}{5mm}{}
        $\pi/4$ & $8$ & $0.26777$ & $13.517$ & $0.27212$ & $8$ & $0.26753$ & $13.574$ & $0.27146$ \\
        \hline
        \rule[-2mm]{0mm}{5mm}{}
        $\pi/3$ & $10$ & $0.25182$ & $15.412$ & $0.25485$ & $10$ & $0.25209$ & $15.457$ & $0.25438$ \\
        \hline
        \rule[-2mm]{0mm}{5mm}{}
        $\pi/2$ & $9$ &$0.40432$ & $5.5988$ & $0.42283$ & $9$ & $0.40419$ & $5.7812$ & $0.41595$ \\
        \hline
        \rule[-2mm]{0mm}{5mm}{}
        $2\pi/3$ & $8$& $0.59964$ & $2.3954$ & $0.64644$ & $8$ & $0.60079$ & $2.5511$ & $0.62617$ \\
        \hline
        \rule[-2mm]{0mm}{5mm}{}
        $3\pi/4$ & $10$ & $0.72146$ & $1.5550$ & $0.80233$ & $10$ & $0.72420$ & $1.6768$ & $0.77234$\\
        \hline
        \rule[-2mm]{0mm}{5mm}{}
        $5\pi/6$ & $8$ & $0.92197$ & $0.93778$ & $1.03314$ & $8$ & $0.92830$ & $1.0212$ & $0.98968$\\
        \hline
    \end{tabular}

    \label{tab:upperlowerbounds2}
\end{table}

\begin{figure}[htp]
\begin{center}
	\includegraphics[scale=0.7]{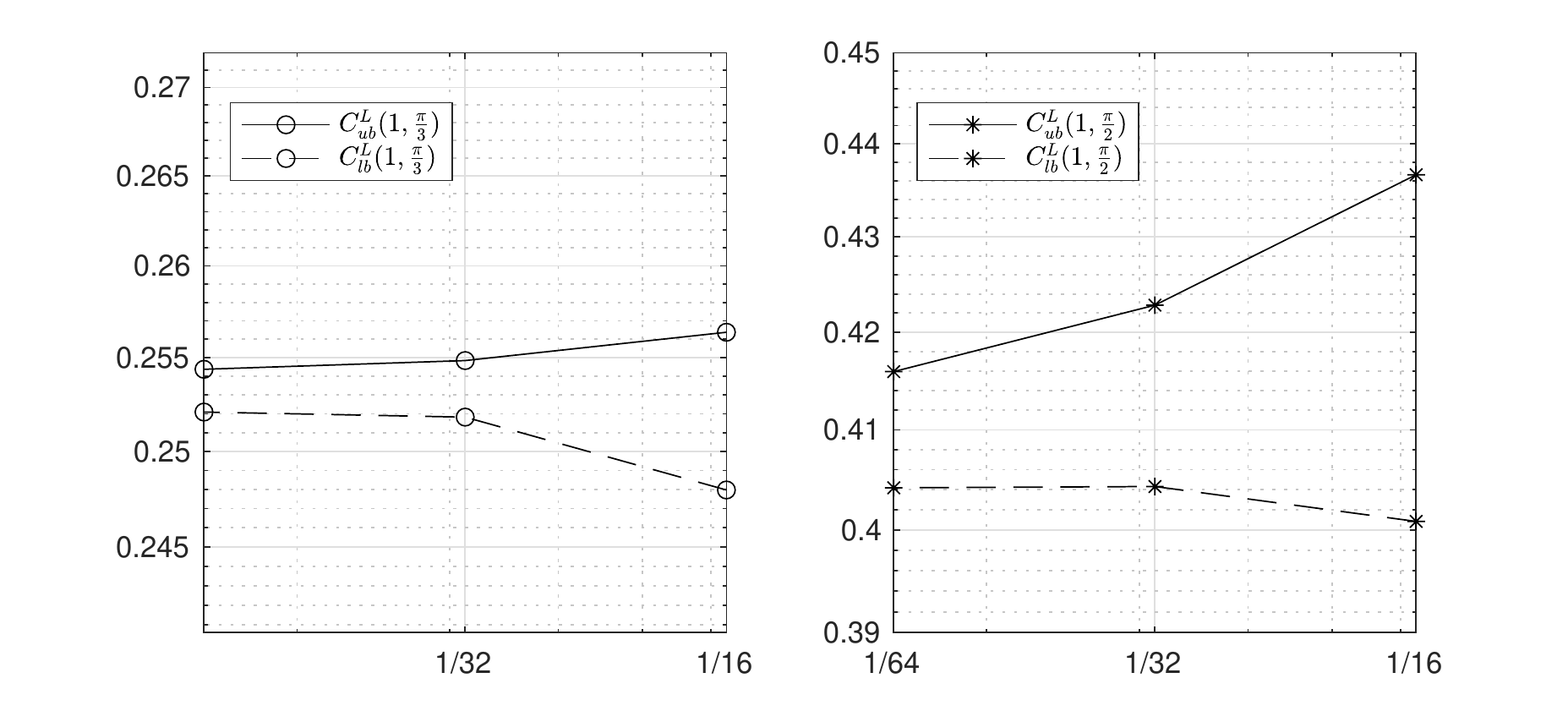}
\end{center}
\caption{\label{fig:bounds} The convergency behaviour of the upper and lower bounds of $C^L(1,\theta)$ for $\theta = \pi/3$ and $\pi/2$.}
\end{figure}

\paragraph{Conclusion} 
In this research, we provide explicit estimates for the $L^\infty$-norm error constant $C^L$ of the linear Lagrange interpolation function over triangular elements.  The formula in Theorem \ref{thm:formula-of-general-bound} provides a bound of $C^L$ that holds for triangle of arbitrary shapes. Theorem \ref{th-main} in \S \ref{sec3} proposes a numerical approach to obtain optimal bounds for the constant $C^L$ over a concrete triangle. The optimization problem corresponding to $C^L$ is novelly solved by utilizing the convex-hull property of Bernstein polynomials. 
 In the near future, the convergency of the numerical approach to solve the optimization problems involving the maximum norm will be systematically considered.


\begin{backmatter}

\section*{Availability of data and material}
An online demo with source codes of the constant evaluation is available at \url{https://ganjin.online/shirley/InterpolationErrorEstimate}.

\section*{Competing interests}
  The authors declare that they have no competing interests.
\section*{Funding} 
The last author is supported by Japan Society for the Promotion of Science: Fund for the Promotion of Joint International Research (Fostering Joint International Research (A)) 20KK0306, 
Grant-in-Aid for Scientific Research (B) 20H01820, 21H00998, and Grant-in-Aid for Scientific Research (C) 18K03411. 
\section*{Authors' contributions}
The first author prepared the manuscript and finished the programming code for the computation examples. The second author prepared the part on solving the objective optimization problem. The last author provided the main idea and advice for this research.
\section*{Acknowledgements}
The authors show great appreciation to Tamaki TANAKA and Syuuji YAMADA from Faculty of Science, Niigata University for their advice on solving the optimization problem in an efficient way, as stated in Lemma 3.1.


\bibliographystyle{bmc-mathphys} 
\bibliography{reference}      

\end{backmatter}
\end{document}